\documentclass[11pt]{article}

\usepackage[margin=1in]{geometry}
\usepackage{amsmath}
\usepackage{amssymb}
\usepackage{amsthm}
\usepackage{booktabs}
\usepackage{color}
\usepackage{hyperref}

\DeclareMathOperator{\tr}{tr}
\DeclareMathOperator{\conv}{conv}
\DeclareMathOperator{\diag}{diag}

\DeclareMathOperator{\rank}{rank}

\newtheorem{theorem}{Theorem}
\newtheorem{corollary}[theorem]{Corollary}
\newtheorem{lemma}[theorem]{Lemma}
\newtheorem{proposition}[theorem]{Proposition}

\newcommand{\Kprod}{\otimes}

\newcommand{\Rbb}{\mathbb{R}}

\newcommand{\tran}{^T}
\newcommand{\norm}[1]{\Vert #1 \Vert}
\newcommand{\leaveout}[1]{}
\newcommand{\set}[1]{\{ #1 \}}
\newcommand{\mprod}[2]{{\langle {#1},{#2}\rangle}}
\newcommand{\suchthat}{\,:\,}
\newcommand{\abs}[1]{\vert #1 \vert}

\newcommand{\gesem}{\succeq}

\newcommand{\abar}{\bar{a}}
\newcommand{\Acal}{{\cal A}}

\newcommand{\Bbar}{\bar{B}}
\newcommand{\eps}{\epsilon}
\newcommand{\lam}{\lambda}
\newcommand{\Ecal}{{\cal E}}
\renewcommand{\hbar}{\bar{h}}
\newcommand{\Hbar}{\bar{H}}
\newcommand{\Ihat}{\hat{I}}
\newcommand{\Kprime}{K^\prime}
\newcommand{\Mbar}{\bar{M}}
\renewcommand{\SS}{J}% letter used for skew-skew matrices
\newcommand{\Ubar}{\bar{U}}
\newcommand{\vbar}{\bar{v}}
\newcommand{\Wcal}{{\cal W}}

\newcommand{\Xbar}{\bar{X}}
\newcommand{\xbar}{\bar{x}}
\newcommand{\ybar}{\bar{y}}

\newcommand{\zstar}{z^*}
\newcommand{\Zbar}{\bar{Z}}

\newcommand{\LOP}{{\rm LOP}}
\newcommand{\SEP}{{\rm SEP}}

\title{Applications of the Lorentz positive cone in \\ nonconvex quadratic optimization}

\author{%
Samuel Burer%
\thanks{Department of Business Analytics, University of Iowa,
Iowa City, IA, 52242-1994, USA. Email: {\tt samuel-burer@uiowa.edu}.}%
\and
Kurt M. Anstreicher%
\thanks{Department of Business Analytics, University of Iowa,
Iowa City, IA, 52242-1994, USA. Email: {\tt kurt-anstreicher@uiowa.edu}.}%
}

\date{\today}

\begin{document}

\maketitle

\begin{abstract}
We consider the Lorentz positive cone of $n\times m$ matrices that
map the Lorentz cone in $\Rbb^m$ into the Lorentz cone in $\Rbb^n$.
The Lorentz positive cone and its dual, the cone of Lorentz separable
matrices, are shown to provide polynomial-time algorithms for the
problem of minimizing a bilinear objective over variables contained in
ellipsoids in $\Rbb^n$ and $\Rbb^m$. We also demonstrate how these cones
can be used to strengthen SDP relaxations of other nonconvex quadratic
optimization problems, including the two-trust-region subproblem.
\end{abstract}

\section{Introduction}

For $n\ge 3$, let $L_n$ denote the Lorentz, or second-order cone, in $\Rbb^n$,
\[
L_n = \set{(y_0,y_1,\ldots,y_{n-1})\tran \suchthat \sum_{i=1}^{n-1} y_i^2 \le y_0^2, y_0 \ge 0}.
\]
The Lorentz positive cone $\LOP(n,m)$ is then the set of $n\times m$ matrices that map $L_m$ into $L_n$,
\[
\LOP(n,m) = \set{M\in \Rbb^{n\times m} \suchthat y=Mx\in L_n\, \forall x\in L_m}.
\]
The Lorentz separable cone  $\SEP(n,m)$ is the set of $n\times m$ matrices that can be
written as the sum of rank-one matrices $yx\tran$ where $y\in L_n$ and $x\in L_m$,
\[
 \SEP(n,m) = \set{M = \sum_{j=1}^k y^j(x^j)\tran \suchthat y^j\in L_n,\,  x^j\in L_m,\,  j=1,\ldots,k}.
\]
It is then easy to show \cite[Corollary 2.6]{Hildebrand.2011} that  $\LOP(n,m)$ is the dual
of $\SEP(n,m)$,
\[
\LOP(n,m) = \SEP^*(n,m) =  \set{S\in\Rbb^{n\times m}\suchthat \mprod{S}{M} \ge 0\, \forall
M \in \SEP(n,m)},
\]
and therefore $\SEP(n,m)= \LOP^*(n,m)$ since $\SEP(n,m)$ is closed. That $\SEP(n,m)\subset \LOP(n,m)$ follows from the fact that $L_m$ is self-dual.   Finally it is clear that by definition, $M\in\SEP(n,m) \iff M\tran\in\SEP(m,n)$, and therefore
$M\in\LOP(n,m) \iff M\tran\in\LOP(m,n)$ as well.

In \cite{Hildebrand.2007}, R. Hildebrand constructed a linear matrix inequality (LMI) that exactly represented $\LOP(n,m)$. The existence of such an LMI had been a long-open question.  The representation in \cite{Hildebrand.2007} was of exponential size, but in subsequent work Hildebrand \cite{Hildebrand.2011} showed that 
$\LOP(n,m)$ can be represented using an LMI of polynomial size in $n$ and $m$.  We describe this representation and the corresponding representation for the
dual cone $\SEP(n,m)$ in Section \ref{sec:cones}.  We also show that there is a 
simple separation procedure for $\LOP(n,m)$ that requires only the solution of an ordinary trust-region subproblem (TRS).  The separation procedure is 
potentially significant because the explicit LMI representation of $\LOP(n,m)$ from \cite{Hildebrand.2011}, while polynomial, is
too large to be computationally practical for $n$ and $m$ beyond approximately 30.

In the setting of 
optimization problems, the ``primal" problem is typically posed over variables that are in $\SEP(n,m)$ and the conic dual problem has
constraints that include $\LOP(n,m)$.  In Section 3 we consider the problem of minimizing a bilinear objective over variables $x\in\Rbb^m$ and $y\in\Rbb^n$ constrained to be in ellipsoids. This problem can be exactly formulated over the $\SEP(n+1,m+1)$ cone and is therefore solvable in polynomial time.  We describe two different solution approaches that reduce the computational burden compared to using exact representations of $\SEP(n+1,m+1)$ or $\LOP(n+1,m+1)$. In particular we show that a dual formulation that combines bisection search and use of the TRS-based separation oracle provides a simple, polynomial-time algorithm without requiring the use of a framework such as the ellipsoid algorithm. 

In Section 4 we consider three more general quadratic problems. In the first case, we add
quadratic terms $x\tran Q x$ and $y\tran P y$ to the bilinear problem from Section 3, so that the objective is a general quadratic in the variables $x$ and $y$.   In the second case, we consider the two-trust-region subproblem (TTRS), where variables $x\in\Ecal_x\subset\Rbb^n$ are also constrained to be in a second ellipsoid,
which can be expressed as  $y=Ax+b\in\Ecal_y\subset\Rbb^n$.  In the third case, we consider a noxious location problem with nonconvex
quadratic constraints.  For all of these problems we show how adding constraints based on the  $\SEP$ cone can be used to tighten the
Shor SDP relaxation.  In computational tests we also compare the improvement obtained using the $\SEP$ cone to another known
methodology based on the Kronecker product of second-order cones \cite{Anstreicher.2017}.

\medskip
\noindent{\bf Notation} All vectors and matrices are real. For $n\times m$ matrices $A$ and $B$, $\mprod{A}{B}$ denotes the inner product $\mprod{A}{B}= \tr(AB\tran)$. The space of $n\times n$ symmetric matrices  is denoted $S^n$, and $n\times n$ positive semidefinite (PSD) matrices are denoted $S^n_+$. We use $(A\, ; B)$ to denote the vertical concatenation $(A\, ; B) = (A\tran, B\tran)\tran$. 
Vectors in $\Rbb^n$ are indexed using $\set{0,1,\ldots, n-1}$, with corresponding indexing for rows and
columns of matrices. We use $e_i$ to denote a vector with a one in the $i$th coordinate and all other elements equal to zero, whose dimension varies with the context.

\section{The Lorentz positive and separable cones} \label{sec:cones}

In this section we describe properties of the Lorentz positive cone $\LOP(n+1,m+1)$ and its dual $\SEP(n+1,m+1)$, where $\min(n,m)\ge 2$.
We use dimensions $(n+1,m+1)$
rather than $(n,m)$ throughout for easier application to the optimization problems considered in the sequel.
 We will first describe the LMI description for $\LOP(n+1,m+1)$ from \cite{Hildebrand.2011}.  There are two main elements in the construction of this LMI.  The first is a linear mapping $\Wcal(\cdot)$ from the set of $(n+1) \times (m+1)$ matrices to the space $S^N$ of symmetric $N\times N$ matrices, where $N=nm$.  To construct this mapping, first consider a mapping
$W_r(\cdot):\Rbb^r \to S^{r-1}$,
\[
W_r(x) = \begin{pmatrix} x_0+x_1 & x_2 & \ldots& x_{r-1}\\
              x_2 & x_0-x_1 & \\
              \vdots & & \ddots \\
              x_{r-1} &  &  & x_0 - x_1 \end{pmatrix}
 \]
The mapping $\Wcal(\cdot): \Rbb^{(n+1)\times(m+1)}\to S^N$ is constructed using $W_{n+1}(\cdot)$ and $W_{m+1}(\cdot)$ via the definition
\[
\Wcal(A) = \sum_{i=0}^{n}\sum_{j=0}^{m} a_{ij}W_{n+1}(e_i)\Kprod W_{m+1}(e_j).
\] 
Next, let $\Acal(r)$ denote the space of skew-symmetric $r\times r$ matrices.
The second component in the LMI description from \cite{Hildebrand.2011} is the subspace of
$S^N$ spanned by Kronecker products of matrices in $ \Acal(n)$ and
$ \Acal(m)$.
 For any $(r,s)$, an orthogonal basis for the subspace $\Acal(r)\Kprod \Acal(s)$ is given by matrices of the form
 \begin{equation}\label{eq:KPSS_basis}
 (E_{ij} - E_{ji})\Kprod(E_{kl} - E_{lk}),\quad 0\le i<j\le r-1,\  0\le k<l\le s-1,
 \end{equation}
where $E_{ij}=e_ie_j\tran$, and therefore $\Acal(r)\Kprod \Acal(s)$ has dimension $rs(r-1)(s-1)/4$.  Each matrix in \eqref{eq:KPSS_basis} has 4 nonzero entries, with Frobenius norm equal to 2. With these definitions we can now give Hildebrand's LMI description for $\LOP(n+1,m+1)$.

\begin{proposition}{\rm \cite{Hildebrand.2011}}.  \label{prop:LOP_LMI}
Let $M\in\Rbb^{(n+1)\times (m+1)}$, $\min(n,m)\ge 2$. Then $M\in \LOP(n+1,m+1)$ if and only if there is an $\SS\in  \Acal(n)\Kprod  \Acal(m)$ so that
$\Wcal(M) + \SS \gesem 0$.
\end{proposition}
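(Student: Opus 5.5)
The plan is to handle the two directions separately. The first one is elementary. The second one I will reduce, by conic duality, to a rank/decomposition statement about a spectrahedral cone, and that statement is where the real difficulty lies.

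\emph{Preliminary facts.} First, $W_r(x)\gesem 0$ iff $x\in L_r$. Indeed, $W_r(x)$ is an arrow matrix, so it is PSD iff $x_0-x_1\ge 0$ and $(x_0+x_1)(x_0-x_1)\ge \sum_{i\ge 2}x_i^2$, which is exactly $x\in L_r$.

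Second, for $u\in\Rbb^{r-1}$ define $y(u)\in\Rbb^r$ by $y(u)_i=u\tran W_r(e_i)u$. Computing directly gives
\[
y_0=\norm{u}^2,\qquad y_1=u_0^2-\textstyle\sum_{k\ge1}u_k^2,\qquad y_k=2u_0u_{k-1}\ (k\ge 2).
\]
One checks $y_0^2=y_1^2+\sum_{k\ge2}y_k^2$. Moreover every boundary point of $L_r$ arises this way (up to the usual spinor parametrization; one may solve for $u_0$ and then for $u_{k-1}$). Hence $\set{y(u)}$ is the full boundary $\partial L_r$, which generates $L_r$.

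Third, for symmetric $S\in S^N$ we have $(u\Kprod v)\tran S(u\Kprod v)=0$ for all $u,v$ iff $S\in\Acal(n)\Kprod\Acal(m)$. The inclusion from right to left holds because $u\tran Au=0$ for skew $A$. The converse follows by comparing coefficients of the biquadratic form against the basis \eqref{eq:KPSS_basis}.

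\emph{Sufficiency.} Suppose $\Wcal(M)+\SS\gesem 0$. For any $u\in\Rbb^n$, $v\in\Rbb^m$,
\[
0\le (u\Kprod v)\tran(\Wcal(M)+\SS)(u\Kprod v)=\sum_{i,j}m_{ij}\,(u\tran W_{n+1}(e_i)u)(v\tran W_{m+1}(e_j)v)=y(u)\tran M\, x(v).
\]
Thus $y\tran Mx\ge0$ for all $y\in\partial L_{n+1}$ and $x\in\partial L_{m+1}$. These boundary points generate the cones, so the inequality holds on all of $L_{n+1}\times L_{m+1}$. By self-duality of $L_{n+1}$ this gives $Mx\in L_{n+1}$, i.e.\ $M\in\LOP(n+1,m+1)$.

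\emph{Necessity.} The same identity shows that $M\in\LOP$ iff the biquadratic form $q_M(u,v)=(u\Kprod v)\tran\Wcal(M)(u\Kprod v)$ is nonnegative. By the third fact, the existence of $\SS$ is exactly the statement that $q_M$ has a PSD Gram matrix in the monomials $u\Kprod v$, i.e.\ that $q_M$ is a sum of squares of bilinear forms. Nonnegative biquadratic forms are not SOS in general, so the special structure of $\Wcal$ must be used.

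I would argue by duality. Let $K=\set{M:\exists \SS,\ \Wcal(M)+\SS\gesem0}$. First I would show $K$ is closed, using that $\Wcal(M_0)\succ0$ for $M_0=e_0e_0\tran$, which gives strict feasibility. Then $K^*=\set{\Wcal^*(Z): Z\gesem0,\ Z\perp\Acal(n)\Kprod\Acal(m)}$. Since $K\subseteq\LOP$, it suffices to prove $K^*\subseteq\SEP(n+1,m+1)$. Concretely: for every PSD $Z$ that is invariant under the ``partial transpose'' symmetry encoded by $Z\perp\Acal(n)\Kprod\Acal(m)$, one must show $\Wcal^*(Z)=\sum_k y(u^k)x(v^k)\tran$ for some vectors $u^k,v^k$. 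Note that $\Wcal^*(uu\tran\Kprod vv\tran)=y(u)x(v)\tran$, so it would suffice to replace $Z$ by a sum of rank-one product terms $u u\tran\Kprod vv\tran$ having the same image under $\Wcal^*$.

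This replacement is the main obstacle. I would first try an extreme-ray argument: take $Z$ extreme in the spectrahedral cone $\set{Z\gesem0,\ Z\perp\Acal\Kprod\Acal}$. Using a Barvinok--Pataki rank bound together with the dimension count $\dim\Acal(n)\Kprod\Acal(m)=nm(n-1)(m-1)/4$, I would show that such $Z$ has rank one. I would then show that a rank-one $Z=zz\tran$ orthogonal to all $(E_{ij}-E_{ji})\Kprod(E_{kl}-E_{lk})$ forces $z$, viewed as an $n\times m$ matrix, to have rank one, i.e.\ $z=u\Kprod v$. Indeed, the orthogonality conditions are precisely the vanishing of all $2\times2$ minors of that matrix. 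If the rank bound is too weak, the fallback is to reduce $Z$ by the invariance of the whole setup under the automorphism groups of $L_{n+1}$ and $L_{m+1}$, which act on $u,v$ through spin representations and preserve $\Acal\Kprod\Acal$. One would then normalize $\Wcal^*(Z)$ to a diagonal-like canonical form and verify separability directly there.
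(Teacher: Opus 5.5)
Your sufficiency direction is correct. So is the dual reformulation of necessity: it suffices to show $\Wcal^*(Z)\in\SEP(n+1,m+1)$ for every $Z\gesem 0$ with $Z\perp\Acal(n)\Kprod\Acal(m)$. One small correction: closedness of $K$ comes from $\Acal(n)\Kprod\Acal(m)$ containing no nonzero PSD matrix, while strict feasibility is what gives your formula for $K^*$. The paper itself does not prove this converse; it cites Hildebrand, and the converse is the entire substance of the proposition.

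\textbf{The rank-one step is false.} By your third fact, $\mathcal{C}=\set{Z\gesem 0\suchthat Z\perp\Acal(n)\Kprod\Acal(m)}$ is the dual of the cone of SOS biquadratic forms in $(u,v)$. The conic hull of the matrices $(u\Kprod v)(u\Kprod v)\tran$ is the dual of the nonnegative biquadratic forms. Suppose every extreme ray of $\mathcal{C}$ had rank one, hence product form by your $2\times 2$-minor computation. Then $\mathcal{C}$ would equal that conic hull, the two duals would coincide, and every nonnegative biquadratic form would be SOS. As you note yourself, this is false: Choi's form is a counterexample already for $n=m=3$. So for $\min(n,m)\ge 3$ the cone $\mathcal{C}$ has extreme rays of rank at least two.

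A rank bound could not have given rank one anyway. With $nm(n-1)(m-1)/4$ equality constraints, the Pataki bound only gives $r(r+1)/2\le nm(n-1)(m-1)/4+1$; more constraints permit higher rank, not lower. For $\min(n,m)=2$ the product-form claim does hold, but that is the classical theorem that nonnegative biquadratic forms in $2$ and $m$ variables are SOS, not a rank count.

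The warning sign is that your argument never uses $\Wcal$. What must be proved is that $\Wcal^*$ maps the nonseparable elements of $\mathcal{C}$ into $\SEP(n+1,m+1)$. Equivalently, the special forms $q_M$ with $M\in\LOP(n+1,m+1)$ must be SOS, even though general nonnegative biquadratic forms are not.

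\textbf{The fallback does not close the gap.} For $n\ge 3$ the automorphism group of $L_{n+1}$ has no linear action on $u\in\Rbb^n$ compatible with $u\mapsto y(u)$. This map sends the whole hyperplane $\set{u_0=0}$ onto the ray through $(1,-1,0,\ldots,0)$, and its fibers there are spheres of dimension $n-2\ge 1$. A rotation in the $(y_1,y_2)$-plane carries that ray to one with $y_0+y_1>0$, whose fibers are pairs $\set{\pm w}$. No invertible linear map on $u$ can realize this. Hence at most the stabilizer of that ray acts on $u$. The invariance you would need to reduce to a Lorentz canonical form is therefore not available for the $nm\times nm$ LMI in the statement. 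Genuine spinor representations, whose dimension grows exponentially, are what produce Hildebrand's earlier exponential-size LMI. Beyond this, the canonical-form reduction and the separability check in the canonical case are only announced, not carried out.

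So necessity remains unproved. You still need an argument that uses the specific structure of $\Wcal$ to produce, for each $M\in\LOP(n+1,m+1)$, some $\SS\in\Acal(n)\Kprod\Acal(m)$ with $\Wcal(M)+\SS\gesem 0$. Equivalently, you need a separable representative of $\Wcal^*(Z)$ for each $Z\in\mathcal{C}$.
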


Using Proposition \ref{prop:LOP_LMI} we can also give an LMI description for the dual cone $\SEP(n+1,m+1) = \LOP^*(n+1,m+1)$. By definition,
\[
S\in\SEP(n+1,m+1) \iff \mprod{S}{M} \ge 0\ \forall M\in\LOP(n+1,m+1),
\]
which by Proposition \ref{prop:LOP_LMI} is equivalent to 
\begin{equation}\label{eq:SEP_min}
\min\set{\mprod{S}{M}\suchthat \Wcal(M) + \SS \gesem 0, \SS\in  \Acal(n)\Kprod  \Acal(m)} = 0,
\end{equation}
where $M\in\Rbb^{(n+1)\times (m+1)}$. Let $\SS^i$, $i=1,\ldots,k$ be a basis for $ \Acal(n)\Kprod  \Acal(m)$, for example the basis from
\eqref{eq:KPSS_basis}.  Then the optimization problem in \eqref{eq:SEP_min}
is equivalent to
\begin{eqnarray}
&& \min\set{\mprod{S}{M}\suchthat \Wcal(M) + \sum_{i=1}^k \alpha_i \SS^i \gesem 0}\nonumber\\
&=& \max_{T\gesem 0} \min_{M,\alpha}\set{\mprod{S}{M} - \mprod{T}{\Wcal(M) + \sum_{i=1}^k \alpha_i \SS^i}}\nonumber\\
&=& \max_{\substack{T\gesem 0\\ \mprod{T}{\SS^i} = 0\ \forall i}} \min_{M}\set{\mprod{S}{M} - \mprod{T}{\Wcal(M)}}\nonumber\\
&=& \max_{\substack{T\gesem 0\\ \mprod{T}{\SS^i} = 0\ \forall i}} \min_{M}\set{\mprod{S-\Wcal^*(T)}{M}},\label{eq:SEP_min2}
\end{eqnarray}
where $\Wcal^*(\cdot):S^N\to\Rbb^{(n+1)\times (m+1)}$ is the adjoint operator
\[
\Wcal^*(T)_{ij} = \mprod{T}{W_{n+1}(e_i)\Kprod W_{m+1}(e_j)}. 
\]
Since the problem in \eqref{eq:SEP_min} is equivalent to \eqref{eq:SEP_min2}, we immediately obtain the following characterization for $\SEP(n+1,m+1)$.
 
\begin{proposition} \label{prop:SEP_LMI}
Let $S\in\Rbb^{(n+1)\times (m+1)}$, $\min(n,m)\ge 2$. Then $S\in \SEP(n+1,m+1)$ if and only if $S=\Wcal^*(T)$ for $T\gesem 0$ with $\mprod{T}{\SS} = 0\ \forall \SS \in  \Acal(n)\Kprod  \Acal(m)$.
\end{proposition}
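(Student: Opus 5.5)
Since $\SEP(n+1,m+1)=\LOP^*(n+1,m+1)$, the statement is the conic dual of Proposition~\ref{prop:LOP_LMI}. I would prove it by showing that the set $C=\set{\Wcal^*(T)\suchthat T\gesem 0,\ \mprod{T}{\SS}=0\ \forall \SS\in\Acal(n)\Kprod\Acal(m)}$ equals $\LOP^*(n+1,m+1)$, treating the two inclusions separately.

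\emph{Easy inclusion, $C\subseteq\SEP$.} Let $S=\Wcal^*(T)$ with $T$ feasible, and let $M\in\LOP(n+1,m+1)$. Proposition~\ref{prop:LOP_LMI} gives some $\SS$ with $\Wcal(M)+\SS\gesem 0$. Then
\[
\mprod{S}{M}=\mprod{T}{\Wcal(M)}=\mprod{T}{\Wcal(M)+\SS}\ge 0,
\]
because $T\perp\SS$ and both $T$ and $\Wcal(M)+\SS$ are PSD. Hence $S\in\LOP^*=\SEP$. This is exactly weak duality in \eqref{eq:SEP_min2}.

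\emph{Hard inclusion, $\SEP\subseteq C$.} First I would note that $C$ is a convex cone and that $C^*=\LOP(n+1,m+1)$. To see this, take $M$ with $\mprod{\Wcal^*(T)}{M}\ge0$ for all feasible $T$. Then $\mprod{T}{\Wcal(M)}\ge 0$ for every $T\in S^N_+\cap(\Acal(n)\Kprod\Acal(m))^\perp$. The dual of that cone is the closure of $S^N_+ + \Acal(n)\Kprod\Acal(m)$. If this sum is closed, then $\Wcal(M)\in S^N_++\Acal(n)\Kprod\Acal(m)$, and Proposition~\ref{prop:LOP_LMI} gives $M\in\LOP$. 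Bipolarity then yields $\bar C=\LOP^*=\SEP$.

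So the real work is closedness, both of $S^N_++\Acal(n)\Kprod\Acal(m)$ and of $C$ itself. For $C$, the standard sufficient condition is that the kernel of $\Wcal^*$ meets the feasible cone only at $0$. I would try to exhibit a strictly feasible point of the dual pair. Let $e=(1,0,\ldots,0)\tran$. The matrix $M_0=ee\tran$ lies in the interior of $\LOP$, and $\Wcal(M_0)=W(e_0)\Kprod W(e_0)=I$ is positive definite. Hence, for every nonzero $T$ in the feasible cone,
\[
\mprod{\Wcal^*(T)}{M_0}=\tr T>0,
\]
so $\Wcal^*(T)\neq 0$. Since the feasible cone is closed and $\Wcal^*$ is injective on it, $C$ is closed.

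Similarly, $I$ is a Slater point with $\mprod{I}{\SS}=\tr\SS=0$, so $I$ lies in the relative interior of $S^N_+\cap(\Acal\Kprod\Acal)^\perp$. By standard conic duality (e.g.\ Rockafellar, Corollary 16.4.2), this makes $S^N_++\Acal(n)\Kprod\Acal(m)$ closed, equal to the dual of that intersection. The argument is the same as strong duality in \eqref{eq:SEP_min2}, with zero duality gap and attainment guaranteed by Slater. Checking these closedness/Slater conditions is the step I expect to be the main obstacle; once they hold, the equivalence follows from \eqref{eq:SEP_min}--\eqref{eq:SEP_min2}.
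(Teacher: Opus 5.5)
Your proof is correct and rests on the same conic duality as the paper's, but it is organized differently. The paper writes $S\in\LOP^*(n+1,m+1)$ as ``problem \eqref{eq:SEP_min} has value $0$'' and passes to the Lagrangian dual \eqref{eq:SEP_min2}. It then notes that the inner minimum over $M$ is finite (and equal to $0$) exactly when $S=\Wcal^*(T)$. The step $\min=\max\min$ there is strong duality with dual attainment. The paper uses it without comment; it holds because $M=E_{00}$, $\SS=0$ is a Slater point, since $\Wcal(E_{00})=I\succ 0$.

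You argue at the level of cones instead:
\begin{itemize}
\item weak duality gives $C\subseteq\SEP(n+1,m+1)$;
\item closedness of $S^N_+ + \Acal(n)\Kprod\Acal(m)$ gives $C^*=\LOP(n+1,m+1)$;
\item the identity $\mprod{\Wcal^*(T)}{E_{00}}=\tr T$ gives closedness of $C$;
\item the bipolar theorem finishes.
\end{itemize}
Your route is longer and needs two closedness facts where the paper makes one appeal to the conic duality theorem. In exchange, it makes explicit the regularity that the paper's ``equivalent'' step relies on. In the paper's version, closedness of $C$ is exactly what dual attainment supplies.

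Two minor points of phrasing:
\begin{itemize}
\item What you actually prove, and what the closed-image criterion needs, is $\ker\Wcal^*\cap\bigl(S^N_+\cap(\Acal(n)\Kprod\Acal(m))^\perp\bigr)=\set{0}$. That is not the same as injectivity of $\Wcal^*$ on that cone.
\item Closedness of $S^N_+ + \Acal(n)\Kprod\Acal(m)$ also follows directly from $S^N_+\cap(\Acal(n)\Kprod\Acal(m))=\set{0}$. This holds because $\tr(A\Kprod B)=\tr A\,\tr B=0$ for skew-symmetric $A,B$, so every element of the span is traceless.
\end{itemize}
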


The representations in Propositions \ref{prop:LOP_LMI} and \ref{prop:SEP_LMI} both involve the PSD constraint
on a matrix in $S^N$, $N=nm$ and the subspace $ \Acal(n)\Kprod \Acal(m)$ with $O(n^2m^2)$ generators. While polynomial, the size of these representations quickly becomes prohibitively large for efficient computation. 

\subsection{A separation oracle for the Lorentz positive cone}

One approach to reduce the computational
burden for an optimization problem that includes $\LOP(n+1,m+1)$ is to not explicitly enforce the constraint that $M\in\LOP(n+1,m+1)$ but instead use valid inequalities of the form $\mprod{A}{M}\, \ge 0$ which hold for any $M\in\LOP(n+1,m+1)$. If the current iterate $M_k$ is not in $\LOP(n+1,m+1)$ then $M_k$ is separated from $\LOP(n+1,m+1)$ using a new valid inequality with $\mprod{A_k}{M_k}\,<0$.
Such a cut or separating hyperplane is then added to the problem description.  If the problem of finding a separating hyperplane can be solved in polynomial time, such a
separation oracle can be combined with the ellipsoid algorithm \cite{Grotschel.Lovasz.Schrijver.1988} or volumetric cutting-plane method \cite{Anstreicher.1997,Vaidya.1996} to provide a polynomial-time algorithm for
the original problem.  More generally, the use of such cuts in an outer approximation method can be very efficient computationally even if not theoretically polynomial-time.

We next describe an efficient separation oracle for $ \LOP(n+1,m+1)$. The existence of such a separation oracle was first suggested by Nemirovski \cite{Nemirovski.2005} and is described in more detail in \cite[Section 3.2.5]{Nemirovski.2012}. We provide an independent separation oracle. By definition, 
\[
M\in \LOP(n+1,m+1) \iff Mx \in L_{n+1}\ \forall x\in L_{m+1}.
\]
For $x\in\Rbb^{m+1}$, $y\in\Rbb^{n+1}$ and $M\in\Rbb^{(n+1)\times (m+1)}$ it will be convenient to
write
\[
x=\begin{pmatrix} x_0 \\ \xbar \end{pmatrix},  \quad
y=\begin{pmatrix} y_0 \\ \ybar \end{pmatrix}, \quad
M=\begin{pmatrix} M_0\tran \\ \Mbar \end{pmatrix}.
\]
If there is an $x$ in $L_{m+1}$ with $M_0\tran x < 0$ then clearly $M\notin \LOP(n+1,m+1)$. Since $L_{m+1}$
is self-dual, this possibility can be avoided by imposing the convex constraint that $M_0\tran\in L_{m+1}$.  Assuming this is the case, we then have
\begin{eqnarray*}
M\in \LOP(n+1,m+1) &\iff& \norm{\Mbar x}\le M_0\tran x\ \forall x\in L_{m+1} \\
&\iff& \min\set{(M_0\tran x)^2 - \norm{\Mbar x}^2 \suchthat x\in L_{m+1}} \ge 0 \\
&\iff& \min\set{(M_0\tran x)^2 - \norm{\Mbar x}^2 \suchthat \norm{\xbar}\le x_0=1} \ge 0.
\end{eqnarray*}
The last problem is an ordinary trust-region subproblem (TRS) in the variables $\xbar\in\Rbb^{m}$ which can be solved in polynomial time \cite{Fu.Luo.Ye.1998}. Suppose
now that an $\xbar$ with $\norm{\xbar}\le x_0 = 1$ has $Mx = v$ with $v_0=M_0\tran x < \norm{\vbar} = \norm{\Mbar x}$. Note that $M_0\tran x \ge 0$
implies that $\vbar\ne 0$. Let $\ybar = -\vbar/\norm{\vbar}$, $y = (1\, ; \ybar)$ and $A= yx\tran$.
Then $y \in L_{n+1}$, so $\mprod{A}{S} = y\tran S x \ge 0$ for any $S\in \LOP(n+1,m+1)$. However
\begin{equation}\label{eq:separation}
\mprod{A}{M} = y\tran Mx =
\begin{pmatrix} 1 \\ -\vbar/\norm{\vbar} \end{pmatrix}\tran \begin{pmatrix} M_0\tran x \\ \Mbar x  \end{pmatrix}
=v_0 - \norm{\vbar} <0,
\end{equation}
so $\mprod{A}{S} \ge 0$ is a valid linear constraint that separates $M$ from $ \LOP(n+1,m+1)$.

The TRS required in the separation oracle is a problem of the form $\min\set{\xbar\tran \Hbar\xbar +2\hbar\tran \xbar \suchthat \norm{\xbar}\le 1}$.  One approach to
solving this problem is to consider the semidefinite programming (SDP) problem $\min\set{\mprod{H}{X} \suchthat X\gesem 0,\, \tr(\Xbar) \le 1}$, where
\[
X= \begin{pmatrix} 1 & \xbar\tran \\ \xbar &\Xbar\end{pmatrix},\quad H = \begin{pmatrix} 0 & \hbar\tran \\\hbar & \Hbar\end{pmatrix}.
\]
It is well known that this SDP is an exact representation of the original TRS \cite{Rendl.Wolkowicz.1997}.  Computationally, the
solution of the SDP may fail to be rank one ($\Xbar = \xbar\xbar\tran)$  if the TRS has multiple optimal solutions.  In this case there are a variety of different methods for
recovering a solution $\xbar$.  One simple approach is to add the constraint $\mprod{H}{X} = \zstar$, where $\zstar$ is the solution value for the SDP, and replace the objective matrix $H$ with a matrix $C$ having random components.  With probability one, re-solving the SDP will then generate a rank-one solution $\Xbar=\xbar\xbar\tran$ with $\mprod{H}{X}=\xbar\tran \Hbar\xbar +2\hbar\tran \xbar =\zstar$. 

\subsection{Properties of the Lorentz separable cone}

The Lorentz separable cone $\SEP(n+1,m+1)= \LOP^*(n+1,m+1)$ consists of matrices of the form
\begin{equation}\label{eq:SEP_decomp}
 \sum_{i=1}^k \begin{pmatrix} \beta_i \\ v^i\end{pmatrix}
  \begin{pmatrix} \alpha_i \\ u^i\end{pmatrix}\tran,
  \end{equation}
  where $v^i\in\Rbb^{n}$, $\norm{v^i}\le\beta_i$, 
  $u^i\in\Rbb^{m}$, $\norm{u^i}\le\alpha_i$, $ i=1,\ldots,k$.
Any nonzero element $Z\in \SEP(n+1,m+1)$ can be normalized by setting $Z_{00}=1$, and we will repeatedly use this normalization in the sequel. The next lemma shows that any $Z\in \SEP(n+1,m+1)$ with $Z_{00}=1$
can be written as the convex combination of rank-one matrices of the form $(1\, ; y)(1\, ; x)\tran$ with
 $\norm{x}\le 1$ and $\norm{y}\le 1$.

\begin{lemma}\label{lem:Z_decomp}
Suppose that $Z\in \SEP(n+1,m+1)$ with $Z_{00} = 1$. Then there are $y^i\in\Rbb^{n}$,
$x^i\in\Rbb^{m}$ and $\lam_i>0$, $i=1,\ldots,k$ with $\norm{y^i}\le 1$,  $\norm{x^i}\le 1$, and
$\sum_{i=1}^k \lam_i=1$  such that
\[
Z= \sum_{i=1}^k \lam_i \begin{pmatrix} 1\\ y^i\end{pmatrix}
  \begin{pmatrix} 1 \\ x^i\end{pmatrix}\tran.
  \]
  \end{lemma}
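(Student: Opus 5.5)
Start from the decomposition \eqref{eq:SEP_decomp}: since $Z\in\SEP(n+1,m+1)$, we can write
\[
Z=\sum_{i=1}^k \begin{pmatrix} \beta_i \\ v^i\end{pmatrix}\begin{pmatrix} \alpha_i \\ u^i\end{pmatrix}\tran,
\]
with $\norm{v^i}\le\beta_i$ and $\norm{u^i}\le\alpha_i$ for every $i$.

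The first step is to discard the degenerate terms. If $\beta_i=0$, then $\norm{v^i}\le 0$ forces $v^i=0$, so the $i$th rank-one term is the zero matrix. The same happens when $\alpha_i=0$, since then $u^i=0$. We may therefore delete every term with $\alpha_i\beta_i=0$ without changing $Z$, and assume $\alpha_i>0$ and $\beta_i>0$ for all remaining $i$. If no terms remain, then $Z=0$, which contradicts $Z_{00}=1$; so $k\ge 1$ after the deletion.

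Next, normalize each remaining term. Set $\lam_i=\alpha_i\beta_i>0$, $y^i=v^i/\beta_i$ and $x^i=u^i/\alpha_i$. Then $\norm{y^i}\le 1$ and $\norm{x^i}\le 1$, and
\[
\begin{pmatrix} \beta_i \\ v^i\end{pmatrix}\begin{pmatrix} \alpha_i \\ u^i\end{pmatrix}\tran
=\beta_i\alpha_i\begin{pmatrix} 1 \\ y^i\end{pmatrix}\begin{pmatrix} 1 \\ x^i\end{pmatrix}\tran
=\lam_i\begin{pmatrix} 1 \\ y^i\end{pmatrix}\begin{pmatrix} 1 \\ x^i\end{pmatrix}\tran .
\]
Summing over $i$ gives the claimed form of $Z$. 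Reading off the $(0,0)$ entry of this sum gives $Z_{00}=\sum_i\lam_i$, and the hypothesis $Z_{00}=1$ then yields $\sum_i\lam_i=1$.

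The only point that needs any care is the removal of the zero terms, because the lemma requires $\lam_i>0$. No real obstacle arises here: the inequality $\norm{v^i}\le\beta_i$ together with $\beta_i=0$ (and likewise for $u^i,\alpha_i$) makes the corresponding term vanish exactly.
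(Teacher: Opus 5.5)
Your proposal is correct and follows essentially the same argument as the paper. You drop the terms with $\alpha_i\beta_i=0$ (they vanish because $\norm{u^i}\le\alpha_i$ and $\norm{v^i}\le\beta_i$), then normalize with $\lam_i=\alpha_i\beta_i$, $y^i=v^i/\beta_i$, $x^i=u^i/\alpha_i$, and you make explicit the small details (nonemptiness of the sum, and $\sum_i\lam_i=Z_{00}=1$) that the paper leaves implicit.
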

  
\begin{proof}
Since $Z\in\SEP(n+1,m+1)$, $Z$ has a decomposition \eqref{eq:SEP_decomp} with $\sum_{i=1}^k \alpha_i\beta_i = 1$. Since $\alpha_i = 0$ implies that $u^i=0$, and $\beta_i=0$ implies that
$v^i=0$, we may assume that $\alpha_i>0$ and $\beta_i>0$ for each $i$.  The result follows by setting $y^i=v^i/\beta_i$, $x^i=u^i/\alpha_i$ and $\lam_i=\alpha_i\beta_i$ for each $i$. 
\end{proof}

Next assume that $Z\in\SEP(n+1,m+1)$ has the form
\begin{equation}\label{eq:Zform}
Z = \begin{pmatrix} 1 & x\tran \\y & V \end{pmatrix},\ y\in\Rbb^{n},\  x\in\Rbb^{m}.
\end{equation}

\begin{lemma}\label{lem:nuclear_norm}
Suppose that $Z\in\SEP(n+1,m+1)$ as in \eqref{eq:Zform}. Then $\norm{V-yx\tran}_*\le 
\sqrt{(1-\norm{x}^2) (1-\norm{y}^2)}$, where $\norm{V}_*$ denotes the nuclear norm of $V$.
\end{lemma}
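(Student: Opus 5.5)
By Lemma~\ref{lem:Z_decomp}, since $Z_{00}=1$ we can write
\[
Z=\sum_{i=1}^k \lam_i \begin{pmatrix} 1\\ y^i\end{pmatrix}\begin{pmatrix} 1\\ x^i\end{pmatrix}\tran,
\qquad \lam_i>0,\quad \sum_{i=1}^k\lam_i=1,\quad \norm{x^i}\le 1,\quad \norm{y^i}\le 1 .
\]
The plan is to treat the $\lam_i$ as a probability distribution over the pairs $(x^i,y^i)$ and to read off $x$, $y$ and $V$ as first and mixed second moments. Comparing blocks with \eqref{eq:Zform} gives
\[
x=\sum_i\lam_i x^i,\qquad y=\sum_i \lam_i y^i,\qquad V=\sum_i\lam_i y^i(x^i)\tran .
\]

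The first key step is the covariance identity
\[
V-yx\tran=\sum_{i=1}^k\lam_i (y^i-y)(x^i-x)\tran .
\]
To verify it, expand the right-hand side: it equals $\sum_i \lam_i y^i(x^i)\tran - y x\tran - yx\tran + yx\tran$, using $\sum_i\lam_i=1$ together with the expressions for $x$ and $y$.

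Next, bound the nuclear norm by the triangle inequality. A rank-one matrix $ab\tran$ has nuclear norm $\norm{a}\,\norm{b}$, so
\[
\norm{V-yx\tran}_*\le \sum_i \lam_i \norm{y^i-y}\,\norm{x^i-x}.
\]
Applying the weighted Cauchy--Schwarz inequality then gives
\[
\norm{V-yx\tran}_*\le \Big(\sum_i\lam_i\norm{y^i-y}^2\Big)^{1/2}\Big(\sum_i\lam_i\norm{x^i-x}^2\Big)^{1/2}.
\]

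Finally, apply the variance identity $\sum_i\lam_i\norm{x^i-x}^2=\sum_i\lam_i\norm{x^i}^2-\norm{x}^2$. Since $\norm{x^i}\le1$ and $\sum_i\lam_i=1$, this is at most $1-\norm{x}^2$. Likewise $\sum_i\lam_i\norm{y^i-y}^2\le 1-\norm{y}^2$. Substituting these bounds yields the claim.

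There is no real obstacle once Lemma~\ref{lem:Z_decomp} is available. The only idea needed is to recenter the decomposition around the means $x$ and $y$, which turns $V-yx\tran$ into a weighted sum of rank-one matrices. The remaining steps are the standard rank-one nuclear-norm formula and Cauchy--Schwarz.
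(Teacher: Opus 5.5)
Your proposal is correct and follows essentially the same route as the paper. Both recenter the decomposition from Lemma~\ref{lem:Z_decomp} so that $V-yx\tran=\sum_i\lam_i(y^i-y)(x^i-x)\tran$, apply the triangle inequality with the rank-one nuclear-norm formula and weighted Cauchy--Schwarz, and finish with the variance identity to bound each factor by $1-\norm{x}^2$ and $1-\norm{y}^2$.
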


\begin{proof}
Using Lemma \ref{lem:Z_decomp}, we have $y=\sum_{i=1}^k \lam_i y^i$, $x=\sum_{i=1}^k \lam_i x^i$ and $V=\sum_{i=1}^k \lam_i  (y^i)(x^i)\tran$,
where $\norm{y^i}\le 1$ and $\norm{x^i}\le 1$ for each $i$. It follows that
$V-yx\tran = \sum_{i=1}^k \lam_i (y^i-y)(x^i-x)\tran$. Therefore 
\begin{eqnarray*}
\norm{V-yx\tran}_* &\le& \sum_{i=1}^k \lam_i \norm{(y^i-y)(x^i-x)\tran}_*\\
&=&\sum_{i=1}^k \lam_i \norm{y^i-y}\norm{x^i-x}\\
&=& a\tran b,
\end{eqnarray*}
where $a_i=\sqrt{\lam_i}\norm{x^i-x}$ and $b_i=\sqrt{\lam_i}\norm{y^i-y}$, $i=1,\ldots,k$. But
\[
a\tran b\le \norm{a} \norm{b}
= \left(\sum_{i=1}^k \lam_i\norm{y^i-y}^2\right)^\frac{1}{2}\left(\sum_{i=1}^k \lam_i\norm{x^i-x}^2\right)^\frac{1}{2}.
\]
Finally,
\begin{eqnarray*}
\sum_{i=1}^k \lam_i\norm{x^i-x}^2 &=&
\sum_{i=1}^k \lam_i(\norm{x^i}^2 -2 x\tran x^i + \norm{x}^2) \\
&=& \sum_{i=1}^k \lam_i \norm{x^i}^2 - \norm{x}^2\\
&\le& 1 - \norm{x}^2,
\end{eqnarray*}
and similarly $\sum_{i=1}^k \lam_i\norm{y^i-y}^2 \le 1- \norm{y}^2$.
\end{proof}

\begin{corollary}\label{cor:SEP_boundary}
Suppose that $Z\in\SEP(n+1,m+1)$ as in \eqref{eq:Zform}, and $\norm{V-yx\tran}_* = 
\sqrt{(1-\norm{x}^2) (1-\norm{y}^2)}$. Then $Z$ is on the boundary of $\SEP(n+1,m+1)$.
\end{corollary}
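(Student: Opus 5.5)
The plan is to argue by contradiction using Lemma \ref{lem:nuclear_norm}. If $Z$ were in the interior of $\SEP(n+1,m+1)$, then every sufficiently small perturbation of $Z$ would remain in the cone. The goal is to exhibit such a perturbation, still with $(0,0)$ entry $1$, whose nuclear-norm gap exceeds the bound of Lemma \ref{lem:nuclear_norm}. It is most convenient to keep $x$ and $y$ fixed and perturb only the block $V$. Specifically, I would consider
\[
Z_\eps = \begin{pmatrix} 1 & x\tran \\ y & V + \eps D\end{pmatrix},
\]
where $D\in\Rbb^{n\times m}$ is a fixed direction to be chosen. Since $Z_\eps\to Z$ as $\eps\to 0$, interiority would give $Z_\eps\in\SEP(n+1,m+1)$ for all small $\eps>0$. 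Lemma \ref{lem:nuclear_norm} would then require
\[
\norm{V-yx\tran+\eps D}_*\le\sqrt{(1-\norm{x}^2)(1-\norm{y}^2)} = \norm{V-yx\tran}_*.
\]

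The key step is choosing $D$ so that the nuclear norm strictly increases. Let $W=V-yx\tran$. If $W\ne 0$, take $D=W$: then $\norm{(1+\eps)W}_*=(1+\eps)\norm{W}_*>\norm{W}_*$, which is a contradiction. If $W=0$, the hypothesis forces $\sqrt{(1-\norm{x}^2)(1-\norm{y}^2)}=0$. In that case take any nonzero $D$, for example $D=e_1e_1\tran$; then $\norm{\eps D}_*=\eps>0$, which again violates the bound. In both cases $Z$ cannot be an interior point. Since $Z\in\SEP(n+1,m+1)$ by assumption, $Z$ must lie on the boundary.

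The only point that needs care is that Lemma \ref{lem:nuclear_norm} applies to the perturbed matrix. This needs $Z_\eps\in\SEP(n+1,m+1)$, which follows from interiority. It also needs $Z_\eps$ to have the normalized form \eqref{eq:Zform}, which holds because the first row and column of $Z_\eps$ are exactly those of $Z$. No further obstacle is expected. The degenerate case $W=0$ is handled separately because scaling $W$ does nothing there.
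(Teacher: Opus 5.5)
Your proof is correct. It shares the paper's basic idea: perturb only $V$, keeping $x$ and $y$ fixed, until the bound of Lemma~\ref{lem:nuclear_norm} fails. It differs from the paper in a way that matters, however.

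The paper first weakens the lemma to the Frobenius bound $\norm{V-yx\tran}_F\le\sqrt{(1-\norm{x}^2)(1-\norm{y}^2)}$ and then perturbs $V$ to violate that. This step needs $\norm{V-yx\tran}_F$ to equal the bound. Given the corollary's nuclear-norm hypothesis, that holds only when $V-yx\tran$ has rank at most one. This is the situation in Lemma~\ref{lem:SEP_interior}, where $V-yx\tran=vu\tran$, but it does not hold in general.

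For a counterexample, take orthonormal $p,q\in\Rbb^n$ and orthonormal $r,s\in\Rbb^m$. Averaging the four terms $(1;\pm p)(1;\pm r)\tran$ and $(1;\pm q)(1;\pm s)\tran$ (with matching signs) gives $Z\in\SEP(n+1,m+1)$ with $x=0$, $y=0$ and $V=\tfrac12(pr\tran+qs\tran)$. Then $\norm{V}_*=1$ attains the bound while $\norm{V}_F=1/\sqrt{2}$ does not, so the Frobenius argument gives nothing here.

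You instead work directly with the nuclear norm, scaling $W=V-yx\tran$ by $1+\eps$, or adding any nonzero $\eps D$ when $W=0$. This is no harder than the paper's route and proves the corollary as stated.
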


\begin{proof}
Lemma \ref{lem:nuclear_norm} implies that any $Z\in\SEP(n+1,m+1)$\ as in \eqref{eq:Zform} also has $\norm{V-yx\tran}_F \le 
\sqrt{(1-\norm{x}^2) (1-\norm{y}^2)}$, since $\norm{\cdot}_F \le \norm{\cdot}_*$.  But if 
$\norm{V-yx\tran}_F  = \sqrt{(1-\norm{x}^2) (1-\norm{y}^2)}$, it is obvious that we can make arbitrarily small changes to
$V$, without changing $x$ and $y$, and violate the Frobenius-norm inequality. Therefore $Z$ lies on the boundary of $\SEP(n+1,m+1)$.
\end{proof}

\section{Bilinear optimization over ellipsoids}

In this section we consider the bilinear optimization problem
\begin{eqnarray}
&\min& c\tran x + d\tran y + y\tran R x\label{eq:bilinear}\\
&{\rm s.t.}& x\in \Ecal_x\subset\Rbb^m,\quad y\in \Ecal_y\subset\Rbb^n,\nonumber
\end{eqnarray}
where $\Ecal_x$ and $\Ecal_y$ are both full-dimensional ellipsoids.
After an affine change of variables we can assume without loss of generality that $\Ecal_x = \set{x\in\Rbb^{m} \suchthat \norm{x}\le 1}$ and $\Ecal_y = \set{y\in\Rbb^{n} \suchthat \norm{y}\le 1}$, so
\[
 \begin{pmatrix} 1 \\ x \end{pmatrix} \in L_{m+1},\quad 
\begin{pmatrix} 1 \\ y \end{pmatrix} \in L_{n+1}.
\]
Using the augmented cost matrix
\[
C = \begin{pmatrix} 0 & c\tran \\ d & R \end{pmatrix}
\]
we can then re-write \eqref{eq:bilinear} as
\begin{equation}\label{eq:bilinear_SEP}
\min \set{\mprod{C}{Z} \suchthat Z\in \SEP(n+1,m+1),\ Z_{00} = 1}.
\end{equation}
The representation in \eqref{eq:bilinear_SEP} is exact due to the decomposition guaranteed to exist for any  $Z\in \SEP(n+1,m+1)$
with  $Z_{00} = 1$ from Lemma \ref{lem:Z_decomp}.
The dual of the conic problem \eqref{eq:bilinear_SEP} is 
\begin{equation}\label{eq:bilinear_dual1}
\max\set{\alpha\suchthat  C-\alpha E_{00}\in \LOP(n+1,m+1)},
\end{equation}
which using the result of Proposition \ref{prop:LOP_LMI} is equivalent to the problem
\[
\max\set{\alpha\suchthat  \Wcal(C-\alpha E_{00}) + \SS \gesem 0, \SS\in\Acal(n)\Kprod\Acal(m)}.
\]
Note that $\Wcal(E_{00}) = I\Kprod I$, so the dual problem can be written
\begin{equation}\label{eq:bilinear_dual2}
\max\set{\alpha\suchthat  \Wcal(C)-\alpha I + \SS \gesem 0, \SS\in\Acal(n)\Kprod\Acal(m)}.
\end{equation}
Taking the dual of the above problem, we obtain an explicit, convex representation for the original problem \eqref{eq:bilinear},
\begin{eqnarray}
&\min& \mprod{\Wcal(C)}{S} \nonumber \\
&{\rm s.t.}& \tr(S) = 1 \label{eq:Full_SEP}\\
&& \mprod{\SS}{S}  = 0\ \forall \SS \in \Acal(n)\Kprod \Acal(m) \nonumber\\
&& S\gesem 0, \nonumber
\end{eqnarray}
where the matrix $S$ is $nm\times nm$. We will refer to the problem in \eqref{eq:Full_SEP} as the ``Full SEP" problem since
it is equivalent to the formulation of the bilinear problem over $\SEP(n+1,m+1)$ in \eqref{eq:bilinear_SEP}. 

Next we will describe three different approaches to solving the original problem \eqref{eq:bilinear} using either the 
$\SEP(n+1,m+1)$ or
$\LOP(n+1,m+1)$ cones.  The first is to directly solve the Full SEP problem in \eqref{eq:Full_SEP}.  Note that from \eqref{eq:KPSS_basis},
the constraint that $\mprod{\SS}{S} = 0\ \forall \SS \in \Acal(n)\Kprod \Acal(m)$ requires $nm(n-1)(m-1)/4$ equations to fully represent.  An alternative to adding all of these constraints is to enforce them on an ``as needed" basis by first solving \eqref{eq:Full_SEP} with these constraints omitted and then checking if any are violated beyond a prescribed tolerance for zero.  If violated equalities are found then they are added to the problem and it is re-solved.  This process is then repeated until a solution is generated that violates none of the equalities.  We refer to this algorithmic approach as ``Lazy SEP." 

Lastly we consider an approach to \eqref{eq:bilinear} based on the dual problem \eqref{eq:bilinear_dual1}.  Note that the dual problem
has only a single variable $\alpha$. Rather than use the explicit representation \eqref{eq:bilinear_dual2}, we will apply the separation
oracle described in the previous section in conjunction with a bisection search on $\alpha$.  We are interested in finding the largest value of $\alpha$ satisfying the condition that
$M(\alpha)\in\LOP(n+1,m+1)$ or equivalently $\norm{\Mbar(\alpha) x}\le M_0\tran(\alpha) x\ \forall x\in L_{m+1}$, where
\[
M(\alpha) =\begin{pmatrix} M_0\tran(\alpha) \\ \Mbar(\alpha) \end{pmatrix}
= C - \alpha E_{00} = \begin{pmatrix} -\alpha & c\tran \\ d & R \end{pmatrix}.
\]
The condition that $M_0\tran(\alpha) \in L_{m+1}$ is simply  $-\alpha\ge\norm{c}$ or $\alpha\le -\norm{c}$.  The fact that $M\tran(\alpha)\in\LOP(m+1,n+1)$ 
similarly implies that we must have $\alpha\le -\norm{d}$.
On the other hand, if $\alpha\le -\norm{c} - \norm{d} - \norm{R}_2$, then for any $x = (1 ; \xbar) \in L_{m+1}$,
\[
M_0\tran(\alpha) x - \norm{\Mbar(\alpha) x} = -\alpha + c\tran\xbar - \norm{d+R\xbar} \ge -\alpha -\norm{c} - \norm{d} -\norm{R}_2 \ge 0,
\]
so an initial interval for the solution value of $\alpha$ can be taken to be $[\alpha_{\min},\alpha_{\max}]$ where $\alpha_{\min} =  -\norm{c} - \norm{d} - \norm{R}_2$, $\alpha_{\max} = -\max(\norm{c},\norm{d})$. For a trial value $\alpha=(\alpha_{\min}+\alpha_{\max})/2$ we can use the separation oracle to
check if $M(\alpha)\in \LOP(n+1,m+1)$. If so we set $\alpha_{\min} = \alpha$, and if not we set $\alpha_{\max} = \alpha$.  This process is repeated until $\alpha_{\max} - \alpha_{\min}\le \eps$, where $\eps$ is a prescribed tolerance for optimality.  

In this application of the separation oracle for $\LOP(n+1,m+1)$ we do not require the separating hyperplane that is found when $M(\alpha)\notin\LOP(n+1,m+1)$, but we
do ultimately need to recover an approximately optimal solution of the original problem \eqref{eq:bilinear}. To this end, suppose that $\alpha\in [\alpha_{\min},
\alpha_{\max}]$ where $\alpha_{\max} - \alpha_{\min} \le \eps$ and the separation oracle finds that $M(\alpha)\notin \LOP(n+1,m+1)$. Then there are $x = (1; \xbar)\in L_{m+1}$ and $y = (1; \ybar)\in L_{n+1}$ with $y\tran M(\alpha) x < 0$, as in \eqref{eq:separation}.  But 
\[
y\tran M(\alpha) x = \begin{pmatrix} 1 \\ \ybar \end{pmatrix}\tran
\begin{pmatrix} -\alpha & c\tran \\ d & R \end{pmatrix}
\begin{pmatrix} 1 \\ \xbar \end{pmatrix} = -\alpha + c\tran \xbar +d\tran\ybar +\ybar\tran R \xbar,
\]
so $y\tran M(\alpha) x < 0$ implies that  $c\tran\xbar +d\tran\ybar +\ybar\tran R \xbar < \alpha \le \alpha_{\min}+\eps$.  Since $\alpha_{\min}$ is a lower bound on the solution value of \eqref{eq:bilinear}, $(\xbar,\ybar)$ is a feasible solution to the problem with objective value within $\eps$ of optimality.

\subsection{Computational results}

We now consider solving instances of the bilinear problem \eqref{eq:bilinear} using the different computational approaches described above. For comparison we also consider 
using the widely-used Gurobi solver, which has the ability to solve nonconvex quadratic problems using convex envelopes and spatial branching. All  runs were done on a Mac mini with Apple M4 chip, 10 cores, and 16 GB memory, running macOS 26.5.2. The conic models were solved with MOSEK 11.2.2 through the Python Fusion API and Gurobi was version 13.0.2 through gurobipy. The scripts used Python 3.14.6, NumPy 2.5.1, and SciPy 1.18.0. Solver defaults were used unless otherwise stated; in particular, we did not manually fix thread counts. For reproducibility, all code is archived at \url{https://github.com/sburer/lorentz-positive}.

\begin{table}
\centering
\caption{Median wall-clock times for bilinear instances.}\label{tab:bilinear_times}
\vskip 10pt
\begingroup
\small
\begin{tabular}{c r  r  r  r  r  r r}
\toprule
$n\times m$ & cases & Shor & Full SEP & Lazy SEP & LOP/TRS & Gurobi & G\_opt \\
\midrule
$2\times 2$ & 10 & 1 ms & 1 ms        & 1 ms & 18 ms & 25 ms & 10 \\
$4\times 4$ & 10 & 1 ms & 2 ms         & 5 ms & 23 ms & 1.81 s & 9 \\
$6\times 6$ & 10 & 1 ms & 11 ms       & 21 ms & 24 ms & 5.00 s & 0 \\
$8\times 8$ & 10 & 2 ms & 49 ms       & 82 ms & 28 ms & 5.00 s & 0 \\
$10\times 10$ & 10 & 2 ms & 321 ms  & 318 ms & 31 ms & 5.01 s & 0 \\
$15\times 15$ & 10 & 3 ms & 12.00 s  & 3.10 s & 40 ms & 36.00 s & 0 \\
$20\times 20$ & 10 & 5 ms & 481.33 s & 51.20 s & 60 ms & 1444.06 s & 0 \\
\midrule
$4\times 8$ & 10 & 1 ms & 7 ms        & 16 ms & 27 ms & 5.00 s & 0 \\
$8\times 12$ & 10 & 2 ms & 248 ms   & 267 ms & 33 ms & 5.01 s & 0 \\
$10\times 20$ & 10 & 3 ms & 7.14 s   & 2.24 s & 51 ms & 21.42 s & 0 \\
$15\times 20$ & 10 & 4 ms & 59.18 s  & 15.22 s & 54 ms & 177.55 s & 0 \\
$15\times 25$ & 10 & 5 ms & 249.54 s & 24.95 s & 65 ms & 748.66 s & 0 \\
\bottomrule
\end{tabular}
\endgroup
\end{table}

We generated 120 random instances of \eqref{eq:bilinear} with
\(\Ecal_x=\{x\in\Rbb^m\suchthat \norm{x}\le 1\}\) and
\(\Ecal_y=\{y\in\Rbb^n\suchthat \norm{y}\le 1\}\). 
For each candidate instance, \(R\in\Rbb^{n\times m}\) was sampled with independent standard normal entries.  Let \(u_1\in\Rbb^n\) and \(v_1\in\Rbb^m\) be the leading left and right singular vectors of \(R\).  We then set \(c=\alpha v_1+\sigma g\) and \(d=\alpha u_1+\sigma h\), where \(g\) and \(h\) have independent standard normal entries, \(\alpha=3.0\), and \(\sigma=0.1\).  Finally, the triple \((c,d,R)\) was divided by \(\max\{\norm{c},\norm{d}\}\), so that \(\max\{\norm{c},\norm{d}\}=1\). As described above this normalization ensures that $\zstar\le -1$, where $\zstar$ is the true solution value of the instance.
For each size we scanned seeds starting from zero and retained the first ten candidates for which the Shor relative gap \((z^*-z_{\rm Shor})/\abs{z^*}\) exceeded \(0.01\). (The Shor
relaxation for \eqref{eq:bilinear} is a special case of the Shor relaxation for the problem considered in Section 4.1; see that section for details.) We compared times for the Shor SDP relaxation, Full SEP, Lazy SEP,
the Dual LOP/TRS bisection method and Gurobi.
Gurobi was run after the three conic methods with explicit bounds
\(-1\le x_j\le 1\), \(-1\le y_i\le 1\) for each $i$ and $j$, the two ball constraints, and time limit $\max\{5,3\,t_{\max}\}$,
where \(t_{\max}\) is the maximum time used by the three conic methods on that
instance, and relative MIP gap tolerance \(10^{-6}\).  Lazy SEP added at most 50 violated equalities in each
outer-approximation round, using relative violation tolerance \(10^{-7}\)
(scaled by \(\norm{S}_F\)).  The Dual
LOP/TRS method used relative bisection tolerance \(10^{-6}\).

The median times required by different methods on these test instances are given in Table \ref{tab:bilinear_times}. In the table, the column $n\times m$ gives the dimensions of $y$ and $x$, respectively. Ten cases were run for each size. The Shor, Full SEP, Lazy SEP, Dual LOP/TRS, and Gurobi columns give median wall-clock times for the five methods.  The G\_opt column gives the number of instances solved to global optimality by Gurobi within its adaptive time limit. 

In Table \ref{tab:bilinear_diagnostics} we give additional numerical
diagnostics for the same instances as in Table \ref{tab:bilinear_times}.
By construction none of the Shor instances has a lower bound that agrees
with the exact conic value within relative tolerance $10^{-2}$. The
``Shor max gap" column gives the largest relative gap between the true
solution and the Shor lower bound. The ``Conic max diff" column is
the largest relative objective difference between the Full SEP value
and the Lazy SEP and Dual LOP/TRS values over all instances of that
size. The Lazy SEP columns give the median/maximum outer-approximation
rounds, total added violated equalities and the largest final relative
equation violation (scaled as in the stopping tolerance). The LOP/TRS
columns report the median/maximum TRS separation oracle solves for the
Dual LOP/TRS method as well as the maximum final relative bisection
bracket width.

\begin{table}[htbp]
\centering
\caption{Additional diagnostics for bilinear instances}\label{tab:bilinear_diagnostics}
\vskip 10pt
\begingroup
\small
\begin{tabular}{c |c| c| r r r| r r}
\toprule
& Shor & Conic & \multicolumn{3}{c|}{Lazy SEP} & \multicolumn{2}{c}{LOP/TRS} \\
$n\times m$ & max gap & max diff & rounds & cuts & max viol & oracles & max gap \\
\midrule
$2\times 2$ & $2.9\cdot 10^{-1}$ & $5.5\cdot 10^{-8}$ & 2/2 & 1/1 & $2.3\cdot 10^{-14}$ & 23/23 & $8.9\cdot 10^{-7}$ \\
$4\times 4$ & $2.4\cdot 10^{-1}$ & $3.2\cdot 10^{-8}$ & 2/2 & 36/36 & $5.8\cdot 10^{-12}$ & 23/23 & $8.3\cdot 10^{-7}$ \\
$6\times 6$ & $1.9\cdot 10^{-1}$ & $1.2\cdot 10^{-7}$ & 3/4 & 100/150 & $1.3\cdot 10^{-8}$ & 23/23 & $8.1\cdot 10^{-7}$ \\
$8\times 8$ & $1.4\cdot 10^{-1}$ & $3.2\cdot 10^{-8}$ & 5/6 & 200/250 & $2.2\cdot 10^{-9}$ & 23/23 & $7.5\cdot 10^{-7}$ \\
$10\times 10$ & $1.2\cdot 10^{-1}$ & $4.1\cdot 10^{-8}$ & 9/12 & 400/550 & $2.1\cdot 10^{-9}$ & 23/23 & $7.3\cdot 10^{-7}$ \\
$15\times 15$ & $6.8\cdot 10^{-2}$ & $1.2\cdot 10^{-8}$ & 22/37 & 1025/1800 & $3.0\cdot 10^{-9}$ & 23/23 & $6.7\cdot 10^{-7}$ \\
$20\times 20$ & $6.7\cdot 10^{-2}$ & $5.5\cdot 10^{-9}$ & 63/76 & 3100/3750 & $9.9\cdot 10^{-9}$ & 23/23 & $6.6\cdot 10^{-7}$ \\
\midrule
$4\times 8$ & $1.8\cdot 10^{-1}$ & $3.2\cdot 10^{-7}$ & 3/4 & 100/150 & $1.8\cdot 10^{-9}$ & 23/23 & $7.9\cdot 10^{-7}$ \\
$8\times 12$ & $1.1\cdot 10^{-1}$ & $1.2\cdot 10^{-7}$ & 9/11 & 400/500 & $2.2\cdot 10^{-9}$ & 23/23 & $7.1\cdot 10^{-7}$ \\
$10\times 20$ & $8.7\cdot 10^{-2}$ & $8.5\cdot 10^{-9}$ & 20/30 & 950/1450 & $2.2\cdot 10^{-9}$ & 23/23 & $6.8\cdot 10^{-7}$ \\
$15\times 20$ & $6.0\cdot 10^{-2}$ & $2.3\cdot 10^{-8}$ & 41/54 & 2000/2650 & $1.9\cdot 10^{-9}$ & 23/23 & $6.5\cdot 10^{-7}$ \\
$15\times 25$ & $5.5\cdot 10^{-2}$ & $3.1\cdot 10^{-7}$ & 46/82 & 2250/4050 & $2.3\cdot 10^{-9}$ & 23/23 & $6.5\cdot 10^{-7}$ \\
\bottomrule
\end{tabular}
\endgroup
\end{table}

To summarize these results, Lazy SEP is slower than Full SEP on smaller instances due to the overhead of checking for violated constraints and repeated conic solves, but the time for Full SEP blows up faster as problem size increases. The Dual LOP/TRS bisection algorithm is very fast and robust and scales much better than Full SEP or Lazy SEP.  These problems are difficult for Gurobi, even with the explicit variable bounds added, and cannot be solved to global optimality in time competitive with the conic methods.

\section{More general quadratic problems}

In this section we consider three different problems with more general quadratic objectives and constraints. In each case our goal is to use additional constraints based on LOP or SEP cones to strengthen the Shor relaxation. In the first case, we 
generalize the bilinear problem from the previous section by adding quadratic terms $x\tran Qx$ and $y\tran Py$ to the objective.  In the second case,
we consider the two-trust-region subproblem (TTRS), where variables $x\in\Rbb^n$ are constrained to lie in two ellipsoids.  In the third case, we consider a problem with nonconvex quadratic constraints which must first be convexified before additional constraints based on the SEP cone can be applied.

\subsection{Quadratic optimization with two ellipsoids}

We first consider the problem 
\begin{eqnarray}
&\min& c\tran x + d\tran y + x\tran Qx + y\tran Py + y\tran Rx\label{eq:quadratic}\\
&{\rm s.t.}& x\in \Ecal_x\subset\Rbb^m,\quad y\in \Ecal_y\subset\Rbb^n,\nonumber
\end{eqnarray}
where $\Ecal_x$ and $\Ecal_y$ are full-dimensional ellipsoids. As for the bilinear
problem \eqref{eq:bilinear}, we can make an affine change of variables and assume without loss of generality that $\Ecal_x = \set{x\in\Rbb^{m} \suchthat \norm{x}\le 1}$ and $\Ecal_y = \set{y\in\Rbb^{n} \suchthat \norm{y}\le 1}$. 
To describe the Shor relaxation of \eqref{eq:quadratic} it is convenient to define
\begin{equation}\label{eq:Shor_matrices}
C = \frac{1}{2} \begin{pmatrix} 0       & c\tran &   d\tran \\
                               c  & 2Q	&R\tran \\
                               d & R & 2P \end{pmatrix},\quad                             
   U =   \begin{pmatrix} 1       & x\tran &   y\tran \\
                               x  & X	&V\tran \\
                               y & V & Y \end{pmatrix}.
 \end{equation}
The Shor relaxation of \eqref{eq:quadratic} is then the SDP problem
\begin{eqnarray}
&\min& \mprod{C}{U}\label{eq:Shor_quadratic}\\
&{\rm s.t.}& \tr{X} \le 1,\ \tr{Y}\le 1,\nonumber\\
& &U\gesem 0,\ U_{00}=1.\nonumber
\end{eqnarray}
Note that the Shor relaxation of the bilinear problem \eqref{eq:bilinear} is simply \eqref{eq:Shor_quadratic} with $Q=0$ and $P=0$.
A rank-one optimal solution of \eqref{eq:Shor_quadratic} immediately provides a solution $(x,y)$ of \eqref{eq:quadratic}, and it follows easily from the Pataki rank bound \cite{Pataki.1998} that if $U$ is an extreme point of \eqref{eq:Shor_quadratic} then $\rank(U) \le 2$. In the latter rank-2 case at optimality, we will consider adding the valid constraint
$Z\in\SEP(n+1,m+1)$, where $Z$ is exactly as in \eqref{eq:Zform}. In the next lemma we show that if $\rank(U)=2$, then the corresponding $Z$ cannot
be in the interior of $\SEP(n+1,m+1)$.
 
\begin{lemma} \label{lem:SEP_interior}
Suppose that $U$ as in \eqref{eq:Shor_matrices} is an extreme point of \eqref{eq:Shor_quadratic} with $\rank(U) = 2$.  Let $Z$ be as in
\eqref{eq:Zform}. Then $Z$ cannot be in the interior of $\SEP(n+1,m+1)$. 
\end{lemma}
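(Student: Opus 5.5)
The plan is to show that at a rank-2 extreme point of \eqref{eq:Shor_quadratic} the matrix $Z$ satisfies the equality case of Lemma \ref{lem:nuclear_norm}, and then to invoke Corollary \ref{cor:SEP_boundary}. If $Z\notin\SEP(n+1,m+1)$ there is nothing to prove, since $Z$ is then certainly not in the interior. So assume $Z\in\SEP(n+1,m+1)$.

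\emph{Step 1: both trace constraints are tight.} Write $U=WW\tran$ with $W$ of size $(1+m+n)\times 2$. The minimal face of $S^{1+m+n}_+$ containing $U$ is $\set{WDW\tran \suchthat D\in S^2_+}$, and $U$ lies in its relative interior. This face has dimension $3$. Suppose $\tr X<1$ or $\tr Y<1$. Then at most two linear equality constraints are active at $U$: $U_{00}=1$ and possibly one trace constraint. These impose at most two linear conditions on $D\in S^2$, so some nonzero $\Delta\in S^2$ satisfies them. For small $t$, both $U\pm tW\Delta W\tran$ remain feasible: they stay PSD because $U$ is relatively interior to the face, and the inactive trace constraint stays strict. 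This contradicts extremality, which is the Pataki-type argument specialized to our setting. Hence $\tr X=\tr Y=1$.

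\emph{Step 2: structure of the Schur complement.} Since $U_{00}=1>0$, the Schur complement
\[
U-\begin{pmatrix}1\\x\\y\end{pmatrix}\begin{pmatrix}1\\x\\y\end{pmatrix}\tran
\]
is PSD. Restricted to the lower $(m+n)\times(m+n)$ block, it has rank $\rank(U)-1=1$. Therefore there are vectors $p\in\Rbb^m$ and $q\in\Rbb^n$ with
\[
X-xx\tran=pp\tran,\qquad Y-yy\tran=qq\tran,\qquad V-yx\tran=qp\tran .
\]
Taking traces and using Step 1 gives $\norm{p}^2=1-\norm{x}^2$ and $\norm{q}^2=1-\norm{y}^2$.

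\emph{Step 3: conclusion.} Because $qp\tran$ has rank one,
\[
\norm{V-yx\tran}_*=\norm{q}\,\norm{p}=\sqrt{(1-\norm{x}^2)(1-\norm{y}^2)}.
\]
Corollary \ref{cor:SEP_boundary} then places $Z$ on the boundary of $\SEP(n+1,m+1)$, so $Z$ is not in the interior.

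The main obstacle is Step 1. One has to check carefully that extremality really forces both trace constraints to be active, since a rank-2 extreme point needs three independent active constraints. Once that holds, the rank-one Schur complement makes the equality in Lemma \ref{lem:nuclear_norm} immediate.
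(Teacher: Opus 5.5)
Your proposal is correct and follows the paper's proof essentially step for step: both trace constraints are tight, the Schur complement has rank one so that $V-yx\tran=qp\tran$ attains equality in Lemma~\ref{lem:nuclear_norm}, and Corollary~\ref{cor:SEP_boundary} finishes. The only difference is that you prove $\tr(X)=\tr(Y)=1$ directly with a face-dimension perturbation argument, whereas the paper simply cites Pataki for this.
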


\begin{proof}
If $\rank(U) = 2$, it must be that $\tr(X) = \tr(Y) = 1$ \cite{Pataki.1998}. Then $\rank(\Ubar) = 1$, where $\Ubar$ is the Schur complement matrix
\[
\Ubar = \begin{pmatrix} X - xx\tran & V\tran - xy\tran \\ V-yx\tran & Y - yy\tran\end{pmatrix}.
\]
Therefore $\Ubar = (u ;v)(u ;v)\tran$, where $u\in\Rbb^m$ and $v\in\Rbb^n$, so $X-xx\tran = uu\tran$, $Y - yy\tran = vv\tran$, $V-yx\tran = vu\tran$. Since
$\tr(X) = \tr(Y) = 1$ we have $\norm{u}^2 = \tr(X -xx\tran) = 1-\norm{x}^2 $ and  $\norm{v}^2 = \tr(Y -yy\tran) = 1-\norm{y}^2 $.  Then
\[
\norm{V-yx\tran}_* = \norm{vu\tran}_* = \norm{u}\norm{v} = \sqrt{(1-\norm{x}^2)(1-\norm{y}^2)},
\]
and by Corollary \ref{cor:SEP_boundary}, if $Z\in\SEP(n+1,m+1)$ then $Z$ must be on the boundary of $\SEP(n+1,m+1)$.
\end{proof}

Lemma \ref{lem:SEP_interior} suggests that if the solution of \eqref{eq:Shor_quadratic} is rank 2, then it is very likely that the constraint $Z\in\SEP(n+1,m+1)$ is violated and therefore adding this constraint can strictly tighten the relaxation.  Adding this constraint to the Shor relaxation \eqref{eq:Shor_quadratic} and using the representation from Proposition \ref{prop:SEP_LMI}
results in the problem
\begin{eqnarray}
&\min& \mprod{C}{U}\label{eq:SEP_quadratic}\\
&{\rm s.t.}& \tr{X} \le 1,\ \tr{Y}\le 1,\ U_{00}=1,\nonumber\\
& &Z = \begin{pmatrix} 1 & x\tran \\ y & V\end{pmatrix}=\Wcal^*(T),\ \mprod{T}{\SS} = 0\ \forall \SS \in \Acal(n)\Kprod \Acal(m),\nonumber\\
& &U\gesem 0,\  T\gesem 0,\nonumber
\end{eqnarray}
where $T\in S^{nm}$.   Similar to the terminology used for the bilinear
problem of the previous section, we refer to \eqref{eq:SEP_quadratic} as the ``Full SEP" problem and use ``Lazy SEP" to refer to the approach where the constraints $\mprod{T}{\SS} = 0\ \forall \SS \in \Acal(n)\Kprod \Acal(m)$ are added as needed.

We will also consider the possibility of adding another constraint to tighten \eqref{eq:Shor_quadratic} based on the methodology of \cite{Anstreicher.2017}.
The constraints that $\norm{x}\le 1$ and $\norm{y}\le 1$ can be expressed in the form $\Ihat_x\gesem 0$, $\Ihat_y\gesem 0$, where
\begin{equation}\label{eq:Ihat}
\Ihat_x = \begin{pmatrix} I &  x \\ x\tran & 1\end{pmatrix} \in S^{m+1},
\quad  \Ihat_y = \begin{pmatrix} I &  y \\ y\tran &1\end{pmatrix} \in S^{n+1}.
\end{equation}
Then $\Ihat_x\gesem 0$ and $\Ihat_y\gesem 0$ imply that $\Ihat_x\Kprod \Ihat_y\gesem 0$, where
\[
\Ihat_x\Kprod \Ihat_y = \begin{pmatrix} \Ihat_y & & &x_1\Ihat_y \\
 & \ddots & &\vdots\\
 & & \Ihat_y &  x_m\Ihat_y\\
 x_1\Ihat_y & \cdots &x_m\Ihat_y & \Ihat_y\\ \end{pmatrix}.
 \]
Next,  the vector  $x_j y$ that appears in $x_j\Ihat_y$ can be replaced by $V_j$, the $j$th column of $V$, using $V$ as a proxy for the rank-one matrix $yx\tran$. Doing so
 results in the LMI constraint $K(Z)\gesem 0$, where
 \begin{equation}\label{eq:KRON_quadratic}
 K(Z)  = \begin{pmatrix} \Ihat_y & & &H_1(Z) \\
 & \ddots & &\vdots\\
 & & \Ihat_y &  H_m(Z)\\
 H_1(Z) & \cdots &H_m(Z) & \Ihat_y\\ \end{pmatrix},\quad 
 H_j(Z) = \begin{pmatrix} x_j I & V_j \\ V_j\tran & x_j \end{pmatrix}, 
 \end{equation}
$ j=1,\ldots,m$. We will refer to the constraint $K(Z)\gesem 0$ as the KRON constraint since it is  based on the Kronecker-product methodology from \cite{Anstreicher.2017}. The next lemma shows that the KRON constraint $K(Z)\gesem 0$  is dominated by the constraint that $Z\in\SEP(n+1,m+1)$.

\begin{lemma}\label{lem:SEP_vs_KRON}
Suppose that  $Z$ as in \eqref{eq:Zform} has $Z\in\SEP(n+1,m+1)$. Then $K(Z)\gesem 0$.
\end{lemma}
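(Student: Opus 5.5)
The plan is to use the decomposition of Lemma~\ref{lem:Z_decomp} together with the fact that $K(\cdot)$ is linear in $Z$. Since $K(Z)$ depends on $Z$ only through $x$, $y$ and $V$, and the identity blocks in $\Ihat_y$ and $H_j$ carry the coefficient $Z_{00}=1$, I will first write $K(Z)$ as a linear function of $Z$. Concretely, I replace every constant $1$ and $I$ in the definition by $Z_{00}$ and $Z_{00}I$. Call this linear map $\tilde K$. It agrees with $K$ whenever $Z_{00}=1$.

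By Lemma~\ref{lem:Z_decomp}, $Z=\sum_i \lam_i (1\,;y^i)(1\,;x^i)\tran$ with $\lam_i>0$, $\sum_i\lam_i=1$, $\norm{x^i}\le 1$ and $\norm{y^i}\le 1$. By linearity,
\[
K(Z)=\tilde K(Z)=\sum_i \lam_i\, \tilde K\bigl((1\,;y^i)(1\,;x^i)\tran\bigr).
\]
It therefore suffices to show that each rank-one term gives a PSD matrix. For $Z^i=(1\,;y^i)(1\,;x^i)\tran$ we have $V=y^i(x^i)\tran$, so $V_j=x^i_j y^i$. The block $H_j(Z^i)$ equals $x^i_j\,\Ihat_{y^i}$, and the diagonal blocks equal $\Ihat_{y^i}$. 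Hence
\[
\tilde K(Z^i)=\Ihat_{x^i}\Kprod \Ihat_{y^i},
\]
up to the symmetric permutation implicit in the block layout of \eqref{eq:KRON_quadratic}. That layout is exactly the one displayed for $\Ihat_x\Kprod\Ihat_y$ in the paper, so no permutation is actually needed.

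Since $\norm{x^i}\le1$ and $\norm{y^i}\le1$, Schur complements give $\Ihat_{x^i}\gesem0$ and $\Ihat_{y^i}\gesem0$. A Kronecker product of PSD matrices is PSD, so each term is PSD, and the convex combination $K(Z)$ is PSD.

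The only step needing care is the bookkeeping showing that $K$ is affine in $(x,y,V)$ in a way compatible with convex combinations. The key observation is that the constant parts, namely the identity blocks and the $1$'s, are preserved because $\sum_i\lam_i=1$. Thus $K(Z)=\sum_i\lam_i K(Z^i)$ holds directly even without introducing $\tilde K$. I expect no real obstacle beyond checking that the block structure of $K(Z^i)$ matches $\Ihat_{x^i}\Kprod\Ihat_{y^i}$ entry by entry.
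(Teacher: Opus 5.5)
Your proof is correct and takes the same route as the paper. You decompose $Z$ via Lemma~\ref{lem:Z_decomp} into a convex combination of rank-one matrices $Z^i$, note that $K(Z^i)=\Ihat_{x^i}\Kprod\Ihat_{y^i}\gesem 0$, and conclude using the affineness of $K$ (with $\sum_i\lam_i=1$ preserving the constant blocks); you simply spell out the bookkeeping that the paper compresses into ``by construction.''
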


\begin{proof}
By Lemma \ref{lem:Z_decomp} we know that $Z$ can be written as a convex combination of matrices of the form
\begin{equation}\label{eq:Z_decomp}
Z^i =\begin{pmatrix} 1 & (x^i)\tran\\
y^i & y^i(x^i)\tran \end{pmatrix},\quad i=1,\ldots,k
\end{equation}
where  $\norm{x^i}\le 1$, $\norm{y^i}\le 1$ for each $i$. But each such $Z^i$ satisfies $K(Z^i)\gesem 0$ by construction, and therefore $K(Z)\gesem 0$ as well.
\end{proof}

Note that the matrix $K(Z)\in S^N$ for $N =(n+1)(m+1)$, which is approximately the same size as the matrix $\Wcal(M)$ that arises in the representation
for $ \LOP(n+1,m+1)$ in Proposition \ref{prop:LOP_LMI}. The KRON constraint can be implemented much more efficiently
by using cuts in place of the explicit LMI $K(Z)\gesem 0$; see the Appendix for details.

We next consider computational results on instances of the problem
\eqref{eq:quadratic}. To do so, we generated 60 random max-distance
instances, ten each for dimensions $n=2,4,6,8,10,15$. That is, we generated instances of
\eqref{eq:quadratic} of the form
\[
\max\{\norm{(Ax+a)-(By+b)}^2\suchthat \norm{x}\le 1,\ \norm{y}\le 1\},
\]
with \(n=m\), so that \(\Ecal_x=\{Ax+a\suchthat\norm{x}\le 1\}\) and
\(\Ecal_y=\{By+b\suchthat\norm{y}\le 1\}\) are the two original ellipsoids whose
farthest pair of points is sought.  The maps \(A\) and \(B\) were generated as
\(U\Sigma V^\top\) with Haar-random orthogonal \(U,V\) and singular values log-uniform on \([1,5.0]\), and the centers \(a\) and \(b\) have independent normal
entries.  Because the Shor relaxation \eqref{eq:Shor_quadratic} is already exact on most max-distance instances, the instances reported here were selected by a rank-one screen, applied to seeds starting from zero.  The screen first solves \eqref{eq:Shor_quadratic} and discards the instance when the two largest eigenvalues of the optimal \(U\) satisfy \(\lambda_1/\lambda_2>10^4\). Such a solution is effectively rank one, in which case \((x,y)\) is itself a feasible point attaining the bound and the relaxation is exact.  For the remainder, the screen re-solves the Shor relaxation with the Shor objective held fixed up to solver tolerance and with a random linear objective. This exposes rank-one optima when the original solve returned a higher-rank point on a flat optimal face.  If this second solve is still not effectively rank one, the instance is retained. Such instances are rare; retaining 10 per size required scanning up to 16970 seeds.

We compared the Shor SDP relaxation \eqref{eq:Shor_quadratic}, Shor strengthened by the
KRON constraint \(K(Z)\succeq 0\), Lazy KRON, Full SEP \eqref{eq:SEP_quadratic}, and
Lazy SEP, where ``Lazy KRON" refers to the KRON constraint implemented using cuts as described in the Appendix.
Unlike the bilinear problem of Section 3, none of these relaxations is
guaranteed exact for the class \eqref{eq:quadratic}.  For each conic method, we extracted
\((x,y)\) from the first column of the matrix $U$ \eqref{eq:Shor_matrices}, verified the two
ball constraints to tolerance \(10^{-7}\), and evaluated the original
objective from \eqref{eq:quadratic} at that point.  This gives a method-specific feasible
upper bound; no Gurobi incumbent or point recovered by another method is used
to calculate the gap.  
Gurobi was run with explicit bounds \(-1\le x_j\le 1\), \(-1\le y_i\le 1\), the two ball constraints, and time limit \(\max\{5,3\,t_{\max}\}\), where \(t_{\max}\) is the maximum time used by the conic methods on that instance.  Lazy KRON adds a single violated constraint and Lazy SEP adds at most 50 violated constraints in each outer-approximation round, both using relative violation tolerance \(10^{-7}\). 

\begin{table}[htbp]
\centering
\caption{Median wall-clock times for max-distance instances.}
\vskip 10pt
\label{tab:maxdist_times}
\small
\begin{tabular}{c r r r r r r r r}
\toprule
$n$ & cases & Shor & KRON & Lazy KRON & Full SEP & Lazy SEP & Gurobi & Gurobi opt \\
\hline
2 & 10 & 1 ms & 3 ms & 4 ms & 3 ms & 3 ms & 55 ms & 10 \\
4 & 10 & 1 ms & 10 ms & 8 ms & 7 ms & 7 ms & 4.93 s & 5 \\
6 & 10 & 2 ms & 69 ms & 10 ms & 21 ms & 14 ms & 5.00 s & 0 \\
8 & 10 & 2 ms & 605 ms & 22 ms & 84 ms & 32 ms & 5.00 s & 0 \\
10 & 10 & 2 ms & 4.60 s & 38 ms & 398 ms & 58 ms & 13.81 s & 0 \\
15 & 10 & 4 ms & 611.65 s & 151 ms & 16.33 s & 251 ms & 1543.72 s & 0 \\
\bottomrule
\end{tabular}
\end{table}

The median times required by the different methods are given in
Table~\ref{tab:maxdist_times}.  In the table, the column \(n\) gives the
dimensions of \(y\) and \(x\).  The Gurobi opt column gives the
number of instances solved to global optimality by Gurobi within its adaptive
time limit. The Full KRON solve for one instance (\(n=15\), seed 278), used two MOSEK
threads after repeated memory-related termination with the default parallel
setting; its formulation and tolerances were unchanged.

Table~\ref{tab:maxdist_diagnostics} reports bound quality on the same
instances. For Shor, KRON, Lazy KRON, and SEP, the tight column counts
instances whose relative gap compared to the recovered feasible solution
is at most $10^{-5}$, while max gap gives the largest
such gap over the instances of that size. If \(z_{\rm lb}\) is the
computed lower bound and \(z_{\rm ub}\) is the recovered feasible upper
bound, this relative gap is
\begin{equation}\label{eq:rel_gap}
\frac{z_{\rm ub}-z_{\rm lb}}{\max \set{ 1,|z_{\rm lb}|, |z_{\rm ub}|}}.
\end{equation}
 For each $n$,  the number of instances solved  by Lazy KRON and Lazy SEP was
the same as the number solved by KRON and SEP, respectively. The Lazy KRON and Lazy SEP columns give the
median/maximum outer-approximation rounds, total added cuts, and the largest
final relative violation of the separated constraint family. 

Comparing the times in Table \ref{tab:maxdist_times} with the results for the bilinear problems in Table \ref{tab:bilinear_times},
the results for Full SEP are similar for problems of the same size, but the results for Lazy SEP are substantially better for the
larger instances of the max-distance problems.  The reason for this can be seen by comparing the diagnostics in Tables \ref{tab:maxdist_diagnostics} and \ref{tab:bilinear_diagnostics}.  In particular, Lazy SEP required substantially fewer rounds, and cuts, on the max-distance problems compared to the bilinear problems. The KRON relaxation was tight on a large majority of max-distance instances but left a gap on three problems, one each of size 4, 8 and 15. 
The time for KRON grows even faster than for Full SEP, but the time for Lazy KRON is somewhat less than for Lazy SEP on the larger problem sizes.
As in the case of the bilinear problems, these problems are difficult for Gurobi to solve to global optimality for all but the smallest sizes.

\begin{table}[htbp]
\centering
\caption{Bound quality and diagnostics for max-distance instances.}
\label{tab:maxdist_diagnostics}
\vskip 10pt
\begingroup
\setlength{\tabcolsep}{2.5pt}
\small
\begin{tabular}{c|c r|c r|c r|r r r|r r r}
\toprule
& \multicolumn{2}{c|}{Shor} & \multicolumn{2}{c|}{KRON} & \multicolumn{2}{c|}{SEP} & \multicolumn{3}{c|}{Lazy KRON} & \multicolumn{3}{c}{Lazy SEP} \\
$n$ & tight & max gap & tight & max gap & tight & max gap & rounds & cuts & max viol & rounds & cuts & max viol \\
\midrule
2 & 0 & $8.8\cdot 10^{-1}$ & 10 & $1.3\cdot 10^{-6}$ & 10& $2.7\cdot 10^{-6}$ & 5/5 & 4/4 & $8.3\cdot 10^{-8}$ & 1/1 & 0/0 & $1.5\cdot 10^{-8}$ \\
4 & 0 & $7.7\cdot 10^{-1}$ & 9& $2.2\cdot 10^{-5}$ & 10 & $7.3\cdot 10^{-7}$ & 6/9 & 5/8 & $6.1\cdot 10^{-8}$ & 1/1 & 0/0 & $5.9\cdot 10^{-8}$ \\
6 & 0 & $7.6\cdot 10^{-1}$ & 10 & $2.4\cdot 10^{-6}$ & 10 & $3.3\cdot 10^{-8}$ & 4/7 & 4/6 & $8.4\cdot 10^{-8}$ & 1/2 & 0/50 & $2.6\cdot 10^{-9}$ \\
8 & 0 & $7.3\cdot 10^{-1}$ & 9 & $2.3\cdot 10^{-1}$ & 10 & $5.2\cdot 10^{-8}$ & 6/24 & 6/23 & $10.0\cdot 10^{-8}$ & 1/2 & 0/50 & $1.1\cdot 10^{-8}$ \\
10 & 0 & $6.5\cdot 10^{-1}$ & 10 & $1.0\cdot 10^{-6}$ & 10 & $8.4\cdot 10^{-8}$ & 7/12 & 6/11 & $9.6\cdot 10^{-8}$ & 1/2 & 0/50 & $2.0\cdot 10^{-9}$ \\
15 & 0 & $5.5\cdot 10^{-1}$ & 9 & $1.4\cdot 10^{-1}$ & 10 & $1.3\cdot 10^{-7}$ & 8/20 & 7/19 & $9.0\cdot 10^{-8}$ & 1/4 & 0/150 & $1.0\cdot 10^{-8}$ \\
\bottomrule
\end{tabular}
\endgroup
\end{table}

In addition to the 60 problems with results given in Tables \ref{tab:maxdist_times} and \ref{tab:maxdist_diagnostics}, we considered an additional
30 larger max-distance problems generated in a similar manner.  We ran only Lazy KRON and Lazy SEP on these instances due to the excessive times
that would be required by the other methods.  Results on these problems are given in Table \ref{tab:maxdist_larger}.

\begin{table}[htbp]
\centering
\caption{Results for larger max-distance instances}
\label{tab:maxdist_larger}
\vskip 10pt
\begingroup
\setlength{\tabcolsep}{2.5pt}
\small
\begin{tabular}{c c|r c r|r c r|r r r|r r r}
\toprule
& &\multicolumn{3}{c|}{Lazy KRON} & \multicolumn{3}{c|}{Lazy SEP} & \multicolumn{3}{c|}{Lazy KRON} & \multicolumn{3}{c}{Lazy SEP} \\
$n$& cases& time &tight & max gap & time &tight & max gap & rounds & cuts & max viol & rounds & cuts & max viol \\
\midrule 
$20$ &10&294 ms   & 8 & $1.7\cdot 10^{-5}$ & 1.01 s    &10 & $1.7\cdot 10^{-7}$ & 4/9 & 3/8 & $4.7\cdot 10^{-8}$ & 1/2 & 0/50 & $2.8\cdot 10^{-8}$ \\
$25$ &10& 3.85 s    &9& $9.2\cdot 10^{-5}$ & 3.20 s   &10 & $9.5\cdot 10^{-8}$ & 3/7 & 2/6 & $6.7\cdot 10^{-8}$ & 1/1 & 0/0 & $1.6\cdot 10^{-9}$ \\
$30$ &10& 14.54 s     & 9 & $1.8\cdot 10^{-5}$ &9.17 s  &10 & $1.7\cdot 10^{-8}$ & 4/7 & 3/6 & $8.5\cdot 10^{-8}$ & 1/3 & 0/100 & $1.5\cdot 10^{-9}$ \\
\bottomrule
\end{tabular}
\endgroup
\end{table}

\subsection{The two-trust-region subproblem}

In this section we consider the two-trust-region subproblem (TTRS), also often referred to as the Celis-Dennis-Tapia (CDT) problem:
\begin{eqnarray*}
&\min& x\tran Q x + c\tran x\\
&{\rm s.t.}& \norm{x} \le 1,\ \norm{Ax+b}\le 1,
\end{eqnarray*}
where $A$ is a nonsingular $n\times n$ matrix and the matrix $Q$ is indefinite.  Let
\begin{equation}\label{eq:TTRS_matrices}
C = \frac{1}{2} \begin{pmatrix} 0       & c\tran  \\
                               c  & 2Q	 \end{pmatrix},\quad  
    B=   \begin{pmatrix} \norm{b}^2       & b\tran A  \\
                               A\tran b  & A\tran A \end{pmatrix},\quad                                                      
   U_x =   \begin{pmatrix} 1       & x\tran  \\
                               x  & X \end{pmatrix}.
 \end{equation}
Using \eqref{eq:TTRS_matrices}, the Shor relaxation for TTRS can be written
\begin{eqnarray}
&\min& \mprod{C}{U_x}\label{eq:Shor_TTRS}\\
&{\rm s.t.}& \mprod{I-E_{00}}{U_x} \le 1,\ \mprod{B}{U_x}\le 1,\nonumber\\
& &U_x\gesem 0,\ \mprod{E_{00}}{U_x}=1.\nonumber
\end{eqnarray}
It is well known that the Shor relaxation \eqref{eq:Shor_TTRS} may fail to be tight, unlike the Shor relaxation for the
trust-region subproblem (TRS) which is TTRS without the second ellipsoidal constraint $\norm{Ax+b}\le 1$.  Several methods have been devised
to attempt to tighten the Shor relaxation.  In \cite{Burer.Anstreicher.2013}, SOC-RLT constraints are added by using supporting
hyperplanes from the ball constraint $\norm{x}\le 1$ combined with the second ellipsoidal constraint. This approach is
further strengthened in \cite{Anstreicher.2017}, using the Kronecker product of the two SOC constraints.  Our intent here is
to use the $\SEP(n+1,n+1)$ cone to strengthen the Shor relaxation \eqref{eq:Shor_TTRS}.

For $x\in\Rbb^n$ let $y=Ax+b$.  Then $yx\tran = (Ax+b)x\tran$, so replacing $xx\tran$ with $X$ we can write
\begin{equation}\label{eq:Z_TTRS}
Z = \begin{pmatrix} 1 & x\tran \\ y & V\end{pmatrix} = \begin{pmatrix} 1 & x\tran \\ Ax+b & AX+bx\tran \end{pmatrix} 
=   GU_x,\ \mbox{where}\ G=\begin{pmatrix} 1 & 0   \\ b & A\end{pmatrix}.
\end{equation}
As with the problem \eqref{eq:quadratic}, the Pataki rank bound \cite{Pataki.1998} implies that an extreme point solution matrix $U_x$ in
\eqref{eq:Shor_TTRS} has rank at most two, and if $\rank(U_x)=2$ then it must be that both $\mprod{I-E_{00}}{U_x}=1$ and
$\mprod{B}{U_x} = 1$ in the solution of \eqref{eq:Shor_TTRS}.

\begin{lemma} \label{lem:SEP_interior_TTRS}
Suppose that $U_x$ as in \eqref{eq:TTRS_matrices} is an extreme point of \eqref{eq:Shor_TTRS} with $\rank(U_x) = 2$.  Let $Z$ be as in
\eqref{eq:Z_TTRS}. Then $Z$ cannot be in the interior of $\SEP(n+1,n+1)$. 
\end{lemma}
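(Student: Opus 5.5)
The plan is to mimic the proof of Lemma~\ref{lem:SEP_interior}: show that $\norm{V-yx\tran}_*$ attains the bound of Lemma~\ref{lem:nuclear_norm} and then invoke Corollary~\ref{cor:SEP_boundary}. As noted just before the statement, the Pataki rank bound \cite{Pataki.1998} gives that when $\rank(U_x)=2$ both inequality constraints are active. That is, $\tr(X)=\mprod{I-E_{00}}{U_x}=1$ and $\mprod{B}{U_x}=1$.

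First we compute the Schur complement $\Xbar = X - xx\tran \gesem 0$. Since $\rank(U_x)=2$ and $U_{00}=1$, the matrix $\Xbar$ has rank one, so $\Xbar = uu\tran$ for some $u\in\Rbb^n$. The first active constraint gives $\norm{u}^2 = \tr(X)-\norm{x}^2 = 1-\norm{x}^2$.

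Next we express everything in \eqref{eq:Z_TTRS} in terms of $x$ and $u$. We have $y=Ax+b$ and $V = AX + bx\tran$. Hence
\[
V - yx\tran = AX + bx\tran - (Ax+b)x\tran = A(X-xx\tran) = (Au)u\tran .
\]
So, with $v = Au$, we get $\norm{V-yx\tran}_* = \norm{v}\norm{u}$.

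It remains to identify $\norm{v}^2$. Expanding the second active constraint,
\[
1=\mprod{B}{U_x} = \norm{b}^2 + 2b\tran A x + \tr(A\tran A X) = \norm{Ax+b}^2 + \tr(A\tran A\,\Xbar) = \norm{y}^2 + \norm{Au}^2 .
\]
Thus $\norm{v}^2 = 1-\norm{y}^2$, and therefore
\[
\norm{V-yx\tran}_* = \sqrt{(1-\norm{x}^2)(1-\norm{y}^2)} .
\]
Corollary~\ref{cor:SEP_boundary} then shows that if $Z\in\SEP(n+1,n+1)$, it lies on the boundary. Hence $Z$ is not in the interior.

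The only step needing care is the expansion of $\mprod{B}{U_x}$ into $\norm{y}^2+\norm{Au}^2$. This is routine bookkeeping with the off-diagonal blocks $b\tran A x$, counted twice. The reliance on Pataki's bound to make both constraints active is taken from the remark preceding the lemma.
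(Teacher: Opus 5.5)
Your proof is correct and is essentially the paper's argument. The paper embeds $U_x$ into the matrix $U$ of \eqref{eq:Shor_quadratic} via the congruence with $\begin{pmatrix}1&0\\0&I\\b&A\end{pmatrix}$, then reruns the proof of Lemma \ref{lem:SEP_interior}; that $U$ has $\tr(Y)=\mprod{B}{U_x}$ and Schur complement exactly $(u;Au)(u;Au)\tran$, so you are carrying out the same computation directly on $U_x$, and your direct route has the minor advantage of taking the activity of both constraints straight from extremality in \eqref{eq:Shor_TTRS}.
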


\begin{proof}
It is easy to show that the Shor relaxation \eqref{eq:Shor_TTRS} is equivalent to \eqref{eq:Shor_quadratic} with $P=0$, $R=0$ and the added conditions $y=Ax+b$, $V=AX+bx\tran$ and $Y= AXA\tran + bx\tran A\tran + Axb\tran + bb\tran$.  In particular, for 
$y$, $V$ and $Y$ satisfying those conditions,
\[
\begin{pmatrix} 1 & 0 \\ 0 & I \\b & A\end{pmatrix}
\begin{pmatrix} 1 & x\tran \\ x & X \end{pmatrix} 
\begin{pmatrix} 1 & 0 & b\tran \\ 0 & I & A\tran\end{pmatrix} =
\begin{pmatrix} 1 &x\tran & y\tran \\ x & X & V\tran\\ y & V & Y\end{pmatrix},
\]
so $U\gesem 0$ in \eqref{eq:Shor_quadratic} if and only if $U_x\gesem 0$ in \eqref{eq:Shor_TTRS}, and the rank of $U_x$ in
\eqref{eq:Shor_TTRS} is equal to the rank of $U$ in \eqref{eq:Shor_quadratic}. The proof then follows the
proof of Lemma \ref{lem:SEP_interior} with $m=n$.
\end{proof}

Lemma \ref{lem:SEP_interior_TTRS} suggests that if $\rank(U_x)=2$ in the solution of \eqref{eq:Shor_TTRS}, then it is very likely that the constraint $Z=GU_x\in\SEP(n+1,n+1)$ is violated, and therefore adding this constraint can strengthen the relaxation.
Adding this constraint, using the representation from Proposition \ref{prop:SEP_LMI},  we obtain the problem
\begin{eqnarray}
&\min& \mprod{C}{U_x}\label{eq:SEP_TTRS}\\
&{\rm s.t.}& \mprod{I-E_{00}}{U_x} \le 1,\ \mprod{B}{U_x}\le 1,\ \mprod{E_{00}}{U_x}=1,\nonumber\\
& &GU_x=\Wcal^*(T),\ \mprod{T}{\SS} = 0\ \forall \SS \in \Acal(n)\Kprod \Acal(n),\nonumber\\
& &U_x\gesem 0,\  T\gesem 0.\nonumber
\end{eqnarray}
As with other problems that we have considered, we will refer to \eqref{eq:SEP_TTRS} as the Full SEP problem and use Lazy SEP to refer to the approach where the constraints $\mprod{T}{\SS} = 0\ \forall \SS \in \Acal(n)\Kprod \Acal(n)$ are added as needed. 

We will also consider the use of the Kronecker product constraint from \cite{Anstreicher.2017} to strengthen the Shor relaxation \eqref{eq:Shor_TTRS}.  The Kronecker product constraint is based on using the SOC constraints $\Ihat_x\gesem 0$,
$H(x)\gesem 0$, where
\[
\Ihat_x = \begin{pmatrix} I & x \\ x\tran & 1\end{pmatrix},\quad H(x) = \begin{pmatrix} I & Ax+b \\
x\tran A\tran +b\tran & 1\end{pmatrix}.
\]
Forming the Kronecker product $\Ihat_x\Kprod H(x)$ and substituting $X$ for $xx\tran$ results in the constraint
$K(U_x)\gesem 0$, where
\begin{eqnarray*}
 K(U_x)  &=& \begin{pmatrix} H(x) & & &H_1(U_x) \\
 & \ddots & &\vdots\\
 & & H(x) &  H_n(U_x)\\[10pt]
 H_1(U_x) & \cdots &H_n(U_x) & H(x)\\ \end{pmatrix},\\ 
 H_j(U_x) &=& \begin{pmatrix} x_j I & AX_j+bx_j \\ X_j\tran A\tran+x_j b\tran & x_j \end{pmatrix},\ j=1,\ldots,n,
 \end{eqnarray*}
and $X_j$ is the $j$th column of $X$. We will refer to $K(U_x)\gesem 0$ as the KRON constraint. It is shown in 
\cite{Anstreicher.2017} that the KRON constraint can be implemented much more efficiently using cuts as opposed to using the LMI $K(U_x)\gesem 0$.
Recall that for the quadratic problem \eqref{eq:quadratic}, we proved in Lemma \ref{lem:SEP_vs_KRON} that the
SEP constraint implied the KRON constraint.  Unfortunately we cannot extend this result to the TTRS problem, for the following 
reason.  The proof of Lemma \ref{lem:SEP_vs_KRON} uses the fact that from Lemma \ref{lem:Z_decomp}, $Z$ can be
written as a convex combination of matrices as in \eqref{eq:Z_decomp}. This fact remains true for the TTRS problem, using
$Z$ as in \eqref{eq:Z_TTRS}. However, 
we would need to have $y^i=Ax^i+b$ for each $i$ to argue that the KRON constraint is satisfied, and we have no
control over the $(x^i,y^i)$ in \eqref{eq:Z_decomp}.

\begin{table}[htbp]
\centering
\caption{Results on TTRS instances from \cite{Burer.Anstreicher.2013}}
\label{tab:TTRS_results}
\vskip 10pt
\begingroup
\setlength{\tabcolsep}{3pt}
\small
\begin{tabular}{c r|r r r| r r r|r r r }
\toprule
\multicolumn{2}{c|}{} & \multicolumn{3}{c|}{Full SEP} & \multicolumn{3}{c|}{Lazy SEP}& \multicolumn{3}{c}{Gurobi} \\
\(n\) & cases & time & tight & max gap & time & tight & max gap   & time&tight&max gap \\
\midrule
5 & 38 & 13 ms & 38 & $4.9\cdot 10^{-7}$ & 25 ms & 38 & $1.2\cdot 10^{-6}$    & 49 ms & 38 & $8.7\cdot 10^{-7}$ \\
10 & 70 & 457 ms & 70 & $2.3\cdot 10^{-7}$ & 144 ms & 70 & $2.0\cdot 10^{-7}$   &447 ms & 69 & $1.5\cdot 10^{-4}$ \\
20 & 104 & 1016.15 s & 104 & $3.5\cdot 10^{-7}$ & 2.18 s & 104 & $7.5\cdot 10^{-7}$ &487.45 s & 93 & $1.5\cdot 10^{-4}$ \\
\bottomrule
\end{tabular}
\endgroup
\end{table}

\begin{table}[htbp]
\centering
\caption{Additional diagnostics for Lazy SEP on TTRS instances from \cite{Burer.Anstreicher.2013}}
\label{tab:TTRS_diagnostics}
\vskip 10pt
\begingroup
\begin{tabular}{c|r r r}
\toprule
$n$& rounds & cuts&max viol\\ 
\midrule
5 & 2/2 & 80/100 & $7.8\cdot 10^{-9}$ \\
 10 & 2/2 & 100/100 & $8.4\cdot 10^{-9}$\\
20 & 2/3 & 100/200 & $3.9\cdot 10^{-8}$\\
\bottomrule
\end{tabular}
\endgroup
\end{table}

To computationally evaluate the effect of using the SEP constraint to strengthen the Shor relaxation \eqref{eq:Shor_TTRS} we
will first consider a set of test problems that were created in \cite{Burer.Anstreicher.2013}.  In particular we will use the subset of
212 problems that were not solved to optimality using the methodology based on SOC-RLT cuts in \cite{Burer.Anstreicher.2013}.
The 212 problems consist of 38 instances with $n=5$, 70 instances with $n=10$ and 104 instances with $n=20$ (see
\cite[Section 3]{Anstreicher.2017} for more details) and
have been used in a number of subsequent papers
\cite{Anstreicher.2017,Anstreicher.2024,Burer.2025,Consolini.Locatelli.2023,Yang.Burer.2016}.  In the computational
results of \cite[Section 3]{Anstreicher.2017}, 127 of the 212 problems were solved using the KRON constraint implemented using cuts to strengthen the
Shor relaxation. All but one of these problems was solved in \cite{Consolini.Locatelli.2023} using a tailored lower bound combined with local search, and all 212 problems were solved in \cite{Anstreicher.2024} and \cite{Burer.2025}.  The lifting introduced in
\cite{Burer.2025} is proved exact in the case that both constraints in TTRS are spherical, so $A=\alpha I$ for some $\alpha>0$.  

In Table \ref{tab:TTRS_results} we give the results of applying SEP, Lazy SEP and Gurobi to the 212 instances from \cite{Burer.Anstreicher.2013}.  All problems
were solved successfully by both conic methods.
The column \(n\) is the dimension and cases is the number of instances of that size. Time reports the median wall-clock solve time, 
tight counts method-specific recovered feasible upper bounds agreeing with their relaxation lower bounds to relative tolerance \(10^{-5}\) 
using \eqref{eq:rel_gap} and max gap is the largest such relative gap. 
Additional diagnostics for Lazy SEP reported in Table \ref{tab:TTRS_diagnostics} are median/maximum rounds, median/maximum cuts, and maximum final separated violation.  The dramatic time increase for Full SEP on the $n=20$ problems is notable, compared to the much more moderate increase required for Lazy SEP. 

In addition to the 212 problems from \cite{Burer.Anstreicher.2013}, we generated an additional 30 TTRS problems, ten each of sizes $n=20$, 25 and 30, that were filtered to ensure that the Shor relaxation was not tight. These problems were solved using only Lazy KRON and Lazy SEP due to the excessive times
required by the other methods. Results for these instances are given in Table \ref{tab:TTRS_larger}.

\begin{table}[htbp]
\centering
\caption{Results for additional TTRS instances}  
\label{tab:TTRS_larger}
\vskip 10pt
\begingroup
\setlength{\tabcolsep}{2pt}
\small
\begin{tabular}{c c|r c r|r c r|r r r|r r r}
\toprule
& &\multicolumn{3}{c|}{Lazy KRON} & \multicolumn{3}{c|}{Lazy SEP} & \multicolumn{3}{c|}{Lazy KRON} & \multicolumn{3}{c}{Lazy SEP} \\
\(n\)& cases & time &tight & max gap & time& tight & max gap & rounds & cuts & max viol & rounds & cuts & max viol \\
\midrule
20 & 10&   309 ms  &4 & $4.6\cdot 10^{-5}$ & 1.01 s   & 10 & $1.6\cdot 10^{-7}$ & 4/18 & 4/17 & $8.8\cdot 10^{-8}$ & 1/5 & 0/100 & $7.5\cdot 10^{-8}$ \\
25 & 10&    1.47 s &6 & $3.8\cdot 10^{-1}$ &   3.69 s   &  10& $3.8\cdot 10^{-6}$ & 7/22 & 6/21 & $7.8\cdot 10^{-8}$ & 1/10 & 0/900 & $9.9\cdot 10^{-8}$ \\
30 & 10&     20.14 s &4 & $4.2\cdot 10^{-1}$ &   12.15 s &   10 & $5.4\cdot 10^{-7}$ & 6/37 & 5/36 & $9.6\cdot 10^{-8}$ & 2/6 & 52/500 & $9.2\cdot 10^{-8}$ \\
\bottomrule
\end{tabular}
\endgroup
\end{table}

Although the addition of the SEP constraint to the Shor relaxation solves all of the problems considered above, we have determined that it
is not sufficient to give an exact representation for TTRS. To see this, consider the instance of TTRS with $n=2$, $A=\diag(a)$, and
rational data
\begin{equation}
    a=\frac{1}{12}\begin{bmatrix} 16\\ 13\end{bmatrix},
    \qquad
    b=\frac{1}{20}\begin{bmatrix} 11\\ -1\end{bmatrix},
    \qquad
    Q=-\frac{1}{16}\begin{bmatrix} 24 & 1\\ 1 & 13\end{bmatrix},
    \qquad
    c=\begin{bmatrix} -1\\ 0\end{bmatrix}.
    \label{eq:ttrs_counterexample_data}
\end{equation}
Here \(Q\) is negative definite, with eigenvalues \(-1.505635621484\) and
\(-0.806864378516\), so every optimal solution lies on the boundary of the
feasible region.  Because \(\norm{b}<1\), the two balls have a common interior
point and the problem is strictly feasible.  Note that multiplying \(Q\) and
\(c\) by \(16\) makes the objective data integral and rescales all of the objective
values below by the same factor.

The Shor relaxation of the instance \eqref{eq:ttrs_counterexample_data} has solution value \(z_{\rm Shor}=-0.768293943469\),
while addition of the SEP constraint results in a solution value  $z_{\SEP}=-0.632844508289$.
Since \(n=2\), the optimal value of \eqref{eq:ttrs_counterexample_data} can be certified
directly by enumerating the stationary points and mutual intersections of the
two ellipse boundary arcs, which yields
\[
    \zstar=-0.546797627007,
    \qquad
    x^*=\begin{bmatrix} 0.185929524305\\ 0.602567865183\end{bmatrix},
\]
with the second ellipsoidal constraint active and the first slack at \(x^*\).  A
global solve using Gurobi confirms this value to nine digits. The SEP relaxation therefore has a gap of
$\zstar-z_{\SEP}=0.086046881282$, or a relative gap of 15.74\%.

We also considered solving the instance \eqref{eq:ttrs_counterexample_data} using the 
\emph{beta relaxation} from \cite{Burer.2025}.  The beta relaxation
introduces variables \(\beta_i\) representing \(x_i^2\), so that the
two ellipsoid constraints become the linear inequalities
\[
    1-\sum_{i=1,2}\beta_i\geq0,
    \qquad
    1-\norm{b}^2-2\sum_{i=1,2} a_ib_ix_i-\sum_{i=1,2} a_i^2\beta_i\geq0.
\]
The relaxation then imposes the conditions \(\beta_i\geq x_i^2\) through rotated second-order cones, and adds the
Shor, RLT, SOC-RLT, and Kronecker constraints on the resulting lift.
On the instance \eqref{eq:ttrs_counterexample_data}, the beta relaxation has solution value
 $z_{\beta}=-0.550899217259$, with a gap  of $\zstar-z_{\beta}=0.004101590252$ or relative gap of 0.75\%. To our knowledge, this is the first reported TTRS instance on which the beta relaxation fails to be exact. 

\subsection{A noxious location problem}

Given points \(p_1,\ldots,p_m\in\Rbb^2\), consider the planar location problem
\begin{equation}
\begin{array}{ll}
\max & \theta \\
\text{s.t.} & \|x-p_i\|\geq\theta,\qquad i=1,\ldots,m,\\
&x\in\conv\{p_1,\ldots,p_m\}.
\end{array}
\label{eq:noxious}
\end{equation}
We assume without loss of generality that the points \(p_1,\ldots,p_m\in\Rbb^2\) lie in the unit disk.  For an arbitrary set of points $\set{p_i}$, we can if necessary find the minimum radius disk
containing the points by solving a convex optimization problem, and then translate and/or scale the points so that they lie in the unit disk.  We assume that a hyperplane description of $\conv\set{p_1,\ldots,p_m}$ is given by  $\set{x\suchthat Ax\le b}$ where $A$ is a $k\times 2$ matrix.

Let \(u=(x_1,x_2,\theta,\sigma)^T\), where \(\sigma\geq0\), and introduce
\[
H=\begin{pmatrix}1&u^T\\u&U\end{pmatrix}\succeq0.
\]
In order to convexify the constraints of \eqref{eq:noxious} we use the approach of \cite{Burer.Dong.2012} to place a subset of the variables on the surface of a sphere. To accomplish this we add the constraint
\begin{equation}
U_{11}+U_{22}+U_{44}=1
\label{eq:sphere}
\end{equation}
as a proxy for the sphere constraint $\norm{x}^2 + \sigma^2=1$.
Note that the variable $\theta$ (or $U_{33}$, the proxy for $\theta^2$) does not enter into the sphere equation \eqref{eq:sphere}.  The constraints for the Shor
relaxation are then
\begin{equation}
U_{11}+U_{22}-2p_i^Tx+\|p_i\|^2\geq U_{33},
\qquad i=1,\ldots,m,
\label{eq:shor-distance}
\end{equation}
together with \(Ax\leq b\), \(\sigma\geq0\), \eqref{eq:sphere}, and
\(H\succeq0\).  Using  \(\sigma\) and $U_{44}$ in this way is equivalent to the usual
Shor inequality \(U_{11}+U_{22}\leq1\).

In order to apply constraints based on LOP or SEP cones we need to express the constraints of \eqref{eq:noxious} using Lorentz cones. To accomplish this,
for \(i=1,\ldots,m\), define
\[
r_i=\sqrt{1+\|p_i\|^2},\qquad \bar p_i=p_i/r_i,
\]
and let
\[
z_0=(1,x_1,x_2,\sigma)^T,
\qquad
z_i=(r_i-\bar p_i^Tx,\ \bar p_i^Tx,\ \theta,\ \sigma)^T.
\]
Then \(z_0\in L_4\), and
\begin{align*}
z_i^T\operatorname{diag}(1,-1,-1,-1)z_i
 &=1+\|p_i\|^2-2p_i^Tx-\theta^2-\sigma^2\\
 &=\|x-p_i\|^2-\theta^2,
\end{align*}
where the second equality uses the sphere constraint \(\|x\|^2+\sigma^2=1\).  Thus
\(z_i\in L_4\) is exactly the reverse-distance constraint at a point with $\norm{x}^2 + \sigma^2 =1$.
Next, let \(w=(1,u^T)^T\) and choose \(G_i\) so that \(z_i=G_iw\).  At a rank-one
point,
\[
z_i z_j^T=G_iww^TG_j^T.
\]
Replacing \(ww^T\) by \(H\) gives the valid pairwise constraints
\begin{equation}
G_iHG_j^T\in\SEP(4,4),
\qquad 0\leq i<j\leq m.
\label{eq:sep}
\end{equation}
There are \(m+1\) Lorentz blocks and \(m(m+1)/2\) such pairs.  In the
polynomial representation of \(\SEP(4,4)\), each pair uses a \(9\times9\)
positive semidefinite auxiliary matrix and nine equations.
In our computational tests we compare the following relaxations.

\begin{itemize}
\item \textbf{Shor} consists of \eqref{eq:sphere},
      \eqref{eq:shor-distance}, \(Ax\leq b\), \(\sigma\geq0\), and
      \(H\succeq0\).
\item \textbf{RLT} adds the lifted products of the hull slacks
      \(b_j-a_j^Tx\), the first-moment constraints \(z_i\in L_4\), and
      the SOC--RLT constraints obtained by multiplying each hull slack by
      \(z_0\in L_4\).
\item \textbf{KRON} adds the Kronecker positive semidefinite constraint for
      every distinct pair among \(z_0,\ldots,z_m\).
\item \textbf{Full SEP} replaces each KRON constraint by
      \eqref{eq:sep} using the complete polynomial SEP representation.
\item \textbf{Lazy SEP} uses the same SEP representation but initially omits
      the equations and adds violated coordinate equations as needed.
\end{itemize}

For any instance, the true solution value of \eqref{eq:noxious} can be
computed exactly by finite enumeration.  Let \(P=\conv\{p_1,\ldots,p_m\}\)
and, for each \(i\), let \(C_i=\{x\suchthat\norm{x-p_i}\le\norm{x-p_j}\
\forall j\}\) be the Voronoi cell of \(p_i\).  The cells \(C_i\) are
polyhedral and cover the plane, so \eqref{eq:noxious} is equivalent to
\(\max_i \max\{\norm{x-p_i}\suchthat x\in P\cap C_i\}\).  Each inner
problem maximizes a convex function over a polygon, so its maximum is
attained at a vertex of \(P\cap C_i\).  Every such vertex is one of: a
vertex of \(P\); the intersection of an edge of \(P\) with the perpendicular
bisector of some pair \(p_j,p_k\); or a Voronoi vertex, that is, the
circumcenter of some triple \(p_j,p_k,p_l\).  We therefore enumerate
these \(O(m^3)\) candidate points, discard those outside \(P\), evaluate
\(\min_i\norm{x-p_i}\) at each remaining candidate, and take the largest
value.  The candidate set is a superset of the vertices of the polygons
\(P\cap C_i\), since it includes, for example, the intersection of an edge
of \(P\) with every bisector rather than only those bisectors that bound
a cell along that edge.  This causes no harm as every candidate is a
feasible point of \eqref{eq:noxious}, so the largest value of
\(\min_i\norm{x-p_i}\) over the candidates is the true optimal value. The bound ordering
\[
\theta_{\rm true}\leq\theta_{\rm Full\ SEP}
\leq\theta_{\rm KRON}\leq\theta_{\rm RLT}\leq\theta_{\rm Shor}
\]
held on every completed instance, and Full SEP and Lazy SEP differed by at most
\(3.5\times10^{-10}\) on the instances where both were run.

\begin{table}
\caption{Distance upper bounds for regular \(m\)-gons}
\vskip 10pt
\label{tab:regular}
\centering
\begin{tabular}{r|rrrr}
\toprule
\(m\) & Shor & RLT & KRON & Full SEP \\
\midrule
3  & 1.414214 & 1.414214 & 1.174750 & 1.114373 \\
4  & 1.414214 & 1.414214 & 1.133203 & 1.001735 \\
5  & 1.414214 & 1.414214 & 1.163184 & 1.025778 \\
6  & 1.414214 & 1.414214 & 1.133203 & 1.001735 \\
7  & 1.414214 & 1.414214 & 1.149584 & 1.010315 \\
8  & 1.414214 & 1.414214 & 1.133203 & 1.001735 \\
10 & 1.414214 & 1.414214 & 1.133203 & 1.001735 \\
12 & 1.414214 & 1.414214 & 1.133203 & 1.001735 \\
16 & 1.414214 & 1.414214 & 1.133203 & 1.001735 \\
\bottomrule
\end{tabular}
\end{table}

We first consider $m\ge 3$ points corresponding to vertices of a
regular polygon on the unit circle. For any $m$ it is easy to see
that the true solution value is one, attained at the origin. In Table
\ref{tab:regular} we give the solution values for the different
relaxations for various $m$. Full SEP displays the strongest bound
consistently. Lazy SEP is less attractive for this fixed-cone-size
family. At \(m=16\), it adds 1,218 of the 1,224 available coordinate
equations and takes about 35 seconds, whereas Full SEP takes under two
seconds.

We next consider instances where $m$ points are first uniformly
chosen from the unit disk and then translated and scaled using
their minimum enclosing disk. Ten instances were solved for
each $m$. Table \ref{tab:random} reports additive gaps in the
distance objective, \(\theta_{\rm ub}-\theta_{\rm true}\).
The median Full SEP times for \(m=4,6,8,10,15,20\) were
\(0.046,0.096,0.174,0.313,0.819,2.213\) seconds, respectively, while the
median KRON times were \(0.074,0.184,0.506,1.430,7.235,35.145\) seconds. Clearly SEP
gives a substantial improvement over the Shor bound on these problems,
but the relative improvement compared to the Shor gap appears to be
decreasing with $m$.

\begin{table}
\caption{Results for normalized random instances}
\label{tab:random}
\vskip 10pt
\centering
\begin{tabular}{rr|c|c|c|c}
\toprule
 & &Shor gap &  RLT gap & KRON gap & SEP gap\\ 
\(m\) & cases & median/max & median/max & median/max& median/max\\
\midrule
4  & 10 & 0.462/0.525 & 0.454/0.525 & 0.152/0.234 & 0.084/0.219 \\
6  & 10 & 0.445/0.495 & 0.426/0.494 & 0.203/0.264 & 0.144/0.257 \\
8  & 10 & 0.480/0.506 & 0.472/0.505 & 0.268/0.355 & 0.217/0.351 \\
10 & 10 & 0.526/0.618 & 0.526/0.607 & 0.318/0.389 & 0.249/0.332 \\
15 & 10 & 0.537/0.624 & 0.537/0.624 & 0.358/0.402 & 0.315/0.387 \\
20 & 10 & 0.581/0.613 & 0.581/0.613 & 0.402/0.430 & 0.365/0.402 \\
\bottomrule
\end{tabular}
\end{table}

The above results show that in general the SEP constraints do not fully close the gap in instances of \eqref{eq:noxious}.  However, there are cases
where the SEP constraints do close the gap to nearly zero.  For one such example, consider the four points
\begin{equation}\label{eq:targeted}
\begin{aligned}
p_1&=(-0.6188988868,\phantom{-}0.1752460829),\quad
p_2=(\phantom{-}0.6115320348,-0.5806915227),\\
p_3&=(\phantom{-}0.5533450604,-0.7177653858),\quad
p_4=(-0.6230854841,\phantom{-}0.1902842182).
\end{aligned}
\end{equation}
The exact solution value for \eqref{eq:noxious} with the points \eqref{eq:targeted} is \(0.7256779914\). In Table \ref{tab:targeted} we give the bound values obtained by various relaxations for this instance. Full SEP leaves an additive distance gap of approximately \(1.20\times10^{-4}\) compared
to a gap of over $0.49$ for the Shor relaxation.

\begin{table}\caption{Bounds for instance with points \eqref{eq:targeted}}
\label{tab:targeted}
\centering
\vskip 10pt
\begin{tabular}{r|r}
\toprule
method & distance upper bound \\
\midrule
Shor     & 1.2174074784 \\
RLT      & 1.0382899391 \\
KRON     & 0.8379375081 \\
Full SEP & 0.7257975088 \\
\midrule
exact value & 0.7256779914 \\
\bottomrule
\end{tabular}

\end{table}

\section{Conclusion}

In this paper we have demonstrated how Hildebrand's  polynomial LMI representations of the Lorentz positive cone $ \LOP(n+1,m+1)$ and its dual the Lorentz separable cone $\SEP(n+1,m+1)$ can be used to tighten SDP relaxations of nonconvex quadratic optimization problems. We believe that additional applications of these LMI representations could be a fruitful area for further research.

\section*{Acknowledgement}

The authors are grateful to Roland Hildebrand for helpful communications including the details of \cite{Nemirovski.2005}. The AI assistants Claude (Anthropic) and Codex (OpenAI) were used to write the code for the computational experiments and for light editing of the text.

\bibliographystyle{spmpsci}
\bibliography{Lorentz}

\section*{Appendix}

In this appendix we consider the implementation of the Kronecker product constraint $K(Z)\gesem 0$, where $K(Z)$ from \eqref{eq:KRON_quadratic}
applies to the quadratic optimization problem \eqref{eq:quadratic}. The methodology used here is based on \cite{Anstreicher.2017} but omits some
unneeded details.

Let $H(y)=\Ihat_y$ as in \eqref{eq:Ihat}. We assume that $\norm{y}< 1$, so $H(y)$ is nonsingular.  If $\norm{y}=1$ then $Y=yy\tran$ in the Shor relaxation. If in addition $\norm{x}=1$ then $X=xx\tran$ and $(x,y)$ are
an optimal solution to the original problem \eqref{eq:quadratic}.  If $\norm{y} = 1$ and $\norm{x}<1$ then we can interchange  $x$ and $y$. Let
\[
W(Z)  = \begin{pmatrix} I & & & \\
 & \ddots & & \\
 & & I &  \\
 -H_1(Z)H(y)^{-1} & \cdots &-H_m(Z)H(y)^{-1}&I\\ \end{pmatrix}.
\]
Then
\begin{eqnarray*}
W(Z)K(Z)W(Z)\tran &=& \begin{pmatrix} 
H(y) & & & \\
 & \ddots & &\\
    & & H(y) & \\
& & &\Kprime(Z) \end{pmatrix},\\
 \Kprime(Z) &=& H(y) - \sum_{j=1}^m H_j(Z)H(y)^{-1}H_j(Z),
\end{eqnarray*}
 and $K(Z)\gesem 0$ if and
only if $\Kprime(Z)\gesem 0$.  Suppose on the contrary that for $Z=\Zbar$ there is an  $a\in\Rbb^{n+1}$ with $a\tran \Kprime(\Zbar) a<0$.  Then $b\tran K(\Zbar) b <0$, where
\[
b= W(\Zbar)\tran \begin{pmatrix} 0 \\ \vdots \\ 0 \\a \end{pmatrix} =
\begin{pmatrix} B_1\\ \vdots \\ B_m \\ a 
\end{pmatrix},
\]
with $B_j= -H(\ybar)^{-1} H_j(\Zbar)a$, $j=1,\ldots,m$. We consider $B_j$ to be the $j$th column of an $(n+1)\times m$ matrix $B$. Therefore $b\tran K(Z)b\ge 0$ is a valid
constraint on $Z$ that is violated at $\Zbar$, where
\begin{equation}\label{eq:quadratic_cut}
b\tran K(Z) b= a\tran H(y)a +  \sum_{j=1}^m  \big(B_j\tran H(y)B_j + 2a\tran H_j(Z)B_j\big).
\end{equation} 
Let $a = (\abar; \alpha)$ and $B=(\Bbar; \beta\tran)$ where $\beta\in\Rbb^m$. Then
\begin{eqnarray*}
a\tran H(y)a & = & \norm{\abar}^2 + 2\alpha \abar\tran y + \alpha^2\\
B_j\tran H(y)B_j &=& \norm{\Bbar_j}^2 + 2\beta_j\Bbar_j\tran y + \beta_j^2\\
a\tran H_j(Z)B_j & =& \abar\tran \Bbar_j x_j  + \beta_j\abar\tran V_j + \alpha \Bbar_j\tran V_j
+\alpha\beta_jx_j.
\end{eqnarray*} 
Substituting terms into \eqref{eq:quadratic_cut}, we obtain a valid linear constraint, or cut of the form $\mprod{C}{Z}\ge 0$, with $\mprod{C}{\Zbar}< 0$. More precisely,
\begin{eqnarray*}
\mprod{C}{Z} &=& \big(\norm{a}^2 + \sum_{j=1}^m \norm{B_j}^2\big) +2\sum_{j=1}^m (a\tran B_j)x_j \\
 &&+2\big(\alpha \abar\tran + \sum_{j=1}^m \beta_j\Bbar_j\tran\big) y
+2\sum_{j=1}^m (\beta_j\abar\tran + \alpha \Bbar_j\tran) V_j.
\end{eqnarray*}

\end{document}